\newtheorem{theorem}{Theorem}%[section]
\newtheorem{lemma}[theorem]{Lemma}
\newtheorem{corollary}[theorem]{Corollary}
\theoremstyle{definition}
\theoremstyle{remark}
\newcommand{\set}[1]{\left\{#1\right\}}
\newcommand{\abs}[1]{\lvert#1\rvert}
\newcommand{\nm}[1]{\lVert#1\rVert}
\newcommand{\bnm}[1]{\big\lVert#1\big\rVert}
\newcommand{\D}{\mathbb{D}}
\newcommand{\N}{\mathbb{N}}
\newcommand{\R}{\mathbb{R}}
\newcommand{\C}{\mathbb{C}}
\newcommand{\ch}{\mbox{\large $\chi$}}
\renewcommand{\phi}{\varphi}
\renewcommand{\H}{\mathcal{H}}
\begin{document}

% \title[short text for running head]{full title}
\title[Inner Functions in certain Hardy-Sobolev Spaces]{Inner Functions in certain Hardy-Sobolev Spaces }
\thanks{The first author is supported  in part by the Academy of Finland \#286877 and
the Finnish Academy of Science and Letters (Vilho, Yrj\"o and Kalle V\"ais\"al\"a Foundation),
and the second author is supported in part by the Generalitat de Catalunya, grant 2014SGR 75, 
and the Spanish Ministerio de Ciencia e Innovaci\'on, grant MTM2014-51824-P}
%the grants MTM2011-24606 and 2014SGR 75.}

\author{Janne Gr\"ohn}
\address{Department of Physics and Mathematics, University of Eastern Finland\\ 
\indent P.O. Box 111, FI-80101 Joensuu, Finland}
\email{janne.grohn@uef.fi}

\author{Artur Nicolau}
\address{Departament de Matem\`atiques, Universitat Aut\`onoma de Barcelona\newline 
\indent 08193, Bellaterra, Barcelona, Spain}
\email{artur@mat.uab.cat }

\date{\today}

\subjclass[2010]{Primary 30J10; Secondary 30H10}
% 30J10  	Blaschke products
% 30H10  	Hardy spaces
\keywords{Blaschke product, Hardy space, inner function}

\begin{abstract}
For $1/2<p<1$, a description of inner functions whose derivative is in the Hardy space $H^p$ is given 
in terms of either their mapping properties or the geometric distribution of their zeros.
\end{abstract}

\maketitle

%%%%%%%%%%%%%%%%%%%%%%%
%%%% ---- SECTION ---- %%%%
%%%%%%%%%%%%%%%%%%%%%%%

\section{Introduction and results}

Let $\H(\D)$ be the collection of holomorphic functions in the unit disc $\D$ of the 
complex plane $\C$. For $0<p<\infty$, the Hardy space $H^p$ consists of those $f\in \H(\D)$ for which
\begin{equation*}
  \nm{f}_{H^p} = \sup_{0<r<1} \left( \frac{1}{2\pi} \int_0^{2\pi}
    \abs{f(r e^{i\theta})}^p \, d\theta \right)^{1/p}<\infty.
\end{equation*}
A bounded holomorphic function $I\in\H(\D)$ is called inner if $|I(e^{i\theta})| = 1$ for almost 
every (a.e.) $e^{i\theta}\in\partial\D$, where $I(e^{i\theta})=\lim_{r\to 1^-} I(re^{i\theta})$
denotes the radial limit at the point $e^{i\theta}$. It is well-known that every inner function $I$ can
be represented as a product $I = \xi B S$, where $\xi$ is a unimodular constant, $B$ is a~Blaschke product,
and $S$ is a singular inner function; see \cite[p.~72]{G:2007}. 

In the seventies Ahern and Clark \cite{AC:1974,AC:1976} considered inner functions in 
Hardy-Sobolev spaces, that is, those inner functions whose derivative belongs
to a~certain Hardy space, which opened a new page in complex function theory. 
The only inner functions whose derivative is in $H^1$ are Blaschke products with finitely many zeros, 
since these are the only inner functions which extend continuously to the closed unit disc. So, 
given $0<p<1$, the problem consists of describing those inner functions whose derivative is in 
$H^p$. As shown by Cohn \cite{C:1983}, this is related to the  smoothness of functions in the 
corresponding model space. It is well known that $p=1/2$ is a breaking point in this problem. 
For instance, an~inner function whose derivative is in $H^p$ for some $p \geq 1/2$ must be 
a Blaschke product (\cite[Theorem~3]{AC:1974}) while the singular inner function 
$I(z) = \exp((1+z)/(z-1))$ has derivative in $H^p$ for any $p<1/2$. 

Let $0<q<\infty$ and $0<s<1$. The weighted Besov space $B_s^q$ consists of those 
functions $f\in\H(\D)$ for which
\begin{equation*}
  \|f\|_{B_s^q}^q = \int_{\D} |f'(z)|^q (1-|z|)^{(1-s)q-1}\, dm(z)<\infty,
\end{equation*}
where $dm(z)$ denotes the Lebesgue area measure. Let $B\in B^q_s$ be inner.
If $qs\geq 1/2$ then $B$ is Blaschke product \cite[p.~736]{A:1983}, while if $qs \geq 1$ then $B$ is
a~finite Blaschke product \cite[Theorem~3.1]{DGV:2002}. The intermediate cases $1/2<qs<1$, $1\leq q <\infty$,
can be characterized in terms of Hardy spaces. Actually, if $1\leq q <\infty$ and $1/2<qs<1$, an inner function 
is in $B_s^q$ if and only if its derivative belongs to $H^{qs}$.  Moreover, there exists a constant $C=C(q,s)>0$ such that 
for any inner function $B$,
\begin{equation} \label{eq:sobolev}
  C^{-1} \|B'\|_{H^{qs}} \leq \|B\|_{B_s^q} \leq C \, \|B' \|_{H^{qs}}. 
\end{equation}
See \cite[Theorem~2]{V:1984}, and also \cite[Theorem~13]{D:1994} and \cite[Theorem~7(b)]{G:1987}. Note 
that this characterization does not extend to the case $qs=1/2$ by \cite[Lemma~2]{AC:1976}. 
 
Fix $1/2<p<1$. If $B$ is a Blaschke product with zeros $\{z_n\} \subset\D$ satisfying
\begin{equation} \label{eq:protas}
  \sum_n (1-|z_n|)^{1-p} < \infty,
\end{equation}
then Protas proved that  $B'\in H^p$  \cite[Theorem~2]{P:1973}. Conversely, if $B$ is a~Blaschke product
for which $B'\in H^p$, then
\begin{equation*} 
  \sum_n (1-|z_n|)^{(1-p)/p} < \infty
\end{equation*}
by \cite[Theorem 8(i)]{AC:1974}. These results as well as the improvements due to Verbitski\u{\i} (\cite{V:1984}, 
see also \cite{GGJ:2011,J:1990}) do not provide a complete characterization of the Blaschke products $B$ 
with $B' \in H^p$. 

The following results concern the mapping properties of Blaschke products in Hardy-Sobolev spaces. Let 
$B$ be a Blaschke product with zeros $\{z_n \}$. In \cite[p. 28]{C:1950}, for any $0<p<1$, Carleson 
proved that the condition \eqref{eq:protas} holds if and only~if 
\begin{equation*}\label{eq:carleson}
  \int_{\D }\frac{ \log |B(z)|^{-1 }}{(1-|z|)^{1+p}} \, dm(z) < \infty .
\end{equation*}
For $1/2 < p < 1$, Cohn showed \cite[Theorem 3]{C:1983} that a Blaschke product $B$ has derivative 
in $H^p$ if and only if 
\begin{equation*}
  \int_{\gamma} \frac{|dz|}{(1-|z|)^{p}} < \infty , 
\end{equation*}
where $\gamma$ is a Carleson contour of $B$. See also \cite[Theorem 13]{D:1994}. Many of the results 
mentioned above can also be found in the monograph \cite{M:2013}. 

The main purpose of this paper is to present a description of Blaschke products whose derivative is 
in $H^{p}$ for $1/2<p<1$,  in terms of either their mapping properties or the geometric distribution of their zeros. 
For $1<\alpha<\infty$, 
\begin{equation*} 
  \Gamma_\alpha(e^{i\theta}) = \set{ z\in\D :  \abs{z-e^{i\theta}} < \alpha(1-\abs{z})}
\end{equation*}
denotes a Stolz angle with vertex at $e^{i\theta}\in\partial\D$. Given a Blaschke product $B$ 
with zeros $\{z_n\}$, repeated according to their multiplicity, consider the function 
\begin{equation} \label{eq:defF}
  F_{\alpha, B} (e^{i\theta}) = \sum_{z_n\in \Gamma_\alpha(e^{i\theta})} \, \frac{1}{1-|z_n|}, \quad e^{i\theta}\in\partial\D.
\end{equation}

%%%%%%%%%%%%%%%%%%%%%%%%
%%%% ---- THEOREM ---- %%%%
%%%%%%%%%%%%%%%%%%%%%%%%

\begin{theorem} \label{thm:main}
Let $1/2<p<1$, $1<\alpha<\infty$ and $B$ be a Blaschke product
with zeros $\{z_n\}$. Then the following conditions are equivalent:

\begin{enumerate}
\item[\rm (a)] 
$B'\in H^p$; 

\item[\rm (b)]
There exists a constant $0<c<1$
such that
\begin{equation} \label{eq:intcond}
  I(c)= \int_{\{ z\in\D \,:\, |B(z)|<c\}} \frac{dm(z)}{(1-|z|)^{1+p}} < \infty;
\end{equation}

\item[\rm (c)]
$F_{\alpha, B} \in L^p(\partial\D)$.
\end{enumerate}
Moreover, there exist positive constants $C_1 = C_1(c,p)$ and $C_2=C_2(\alpha , p)$
such that if any of the conditions above holds then $C_1^{-1} I(c)^{1/p} \leq \nm{B'}_{H^p} \leq C_1\, I(c)^{1/p}$ and  
$C_2^{-1} \nm{F_{\alpha, B}}_{L^p(\partial\D)} \leq \nm{B'}_{H^p} \leq C_2 \, \nm{F_{\alpha, B}}_{L^p(\partial\D)}$.
\end{theorem}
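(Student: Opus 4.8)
The plan is to establish the equivalences by proving a chain of implications, leveraging the known characterizations cited in the introduction as intermediate waypoints. The natural route is $\mathrm{(b)} \Leftrightarrow \mathrm{(a)}$ via a direct integral estimate connecting the sublevel sets of $B$ with the $H^p$ norm of $B'$, and then $\mathrm{(a)} \Leftrightarrow \mathrm{(c)}$ by relating the tent-sum function $F_{\alpha,B}$ to the distribution of zeros near the boundary. A useful tool throughout will be the Schwarz–Pick type estimate $|B'(z)| \lesssim (1-|z|)^{-1}$ together with the pointwise bound $1-|B(z)|^2 \geq c(1-|z|)|B'(z)|$ on the set where $|B(z)| \geq 1/2$, which ties the derivative's size to the product's distance from the unit circle.

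First I would prove $\mathrm{(a)} \Rightarrow \mathrm{(b)}$ and the accompanying norm bound $I(c)^{1/p} \lesssim \|B'\|_{H^p}^{1/p}$. Starting from $B' \in H^p$, one writes $\|B'\|_{H^p}^p$ as an integral of $|B'|^p$ against harmonic measure; the region $\{|B(z)| < c\}$ is, by Cohn's work and the geometry of Carleson contours, a union of Carleson-type boxes whose total ``area weighted by $(1-|z|)^{-1-p}$'' is controlled by integrating the nontangential maximal function of $B'$. More precisely, I would use the elementary fact that on $\{|B|<c\}$ one has $1-|z| \approx$ (distance to the contour $\gamma$), combine Cohn's contour criterion $\int_\gamma (1-|z|)^{-p}\,|dz| < \infty$ with a Fubini argument slicing $\{|B|<c\}$ by level sets, and then invoke \eqref{eq:sobolev} or the Carleson criterion to pass between the contour integral and the area integral $I(c)$. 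For the converse $\mathrm{(b)} \Rightarrow \mathrm{(a)}$, I would estimate $\|B'\|_{H^p}^p$ by splitting $\D$ into $\{|B| \geq c\}$ and $\{|B| < c\}$: on the former, $|B'| \lesssim (1-|B|^2)/(1-|z|) \lesssim$ a quantity whose $H^p$-average is finite because the boundary values of $B$ are unimodular and the $\{|B|\ge c\}$ region stays away from the contour; on the latter, one uses that $|B'(z)|^p (1-|z|) \lesssim (1-|z|)^{-p}$ and integrates, recognizing the result as essentially $I(c)$ after converting the boundary $H^p$ integral to an area integral via a standard Hardy-space identity (Fubini with the Poisson kernel).

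For the equivalence with $\mathrm{(c)}$, the key observation is that $F_{\alpha,B}(e^{i\theta})$ measures the accumulated ``mass'' $\sum 1/(1-|z_n|)$ of zeros in a Stolz angle, and this is comparable — after integrating the $p$-th power over $\partial\D$ — to the area integral $\int_\D (1-|z|)^{-1-p}\,\chi_{\{|B|<c\}}\,dm$. The bridge is: a zero $z_n$ contributes to $F_{\alpha,B}(e^{i\theta})$ precisely when $e^{i\theta}$ lies in the boundary arc $\approx$ the ``shadow'' of $z_n$, of length $\approx 1-|z_n|$; and the sublevel set $\{|B|<c\}$ is, up to bounded overlap, the union of pseudohyperbolic disks around the $z_n$ of fixed radius. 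Thus $\int_{\partial\D} F_{\alpha,B}^p \approx \int_{\partial\D} \big(\sum_n \chi_{\text{shadow}(z_n)}(e^{i\theta})/(1-|z_n|)\big)^p\,d\theta$, and comparing this with the area integral $I(c)$ reduces — via the $p<1$ subadditivity of $t\mapsto t^p$ together with a packing/almost-disjointness argument for the Stolz-angle intersections — to showing the two tent-space quantities agree up to constants. I would carry this out using the Calderón–Stein characterization of $H^p$ via tent spaces, or more hands-on by the Marcinkiewicz-type square function estimate already implicit in \eqref{eq:sobolev}.

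The main obstacle I anticipate is the $\mathrm{(a)} \Leftrightarrow \mathrm{(c)}$ direction, specifically controlling the \emph{overlap} of Stolz angles and pseudohyperbolic disks when the zeros cluster: the inequality $F_{\alpha,B} \in L^p \Rightarrow B' \in H^p$ requires summing contributions $|B'| \lesssim \sum_n (1-|z_n|)/|1-\bar z_n z|^2$ and showing the nontangential maximal function of this sum has finite $L^p$ norm, which is delicate because $p < 1$ means one cannot simply use the triangle inequality in $L^p$ for the maximal function — one must exploit the geometry so that at each boundary point only ``one scale at a time'' of zeros dominates. The case $p > 1/2$ is exactly what makes the relevant exponent $1-p < 1/2$ and lets a Protas-type rearrangement (as in \eqref{eq:protas}) close the estimate; the threshold $p = 1/2$ failure (noted via \cite{AC:1976}) is a useful sanity check that the argument must genuinely use $p > 1/2$, presumably through convergence of $\sum (1-|z_n|)^{1-p}$ along each nontangential approach region.
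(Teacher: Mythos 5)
There are two genuine gaps, one in each of the hard implications, and in both cases the missing ingredient is the same: a boundary (angular-derivative) comparison with an auxiliary Blaschke product rather than a direct interior estimate.

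For $\mathrm{(b)}\Rightarrow\mathrm{(a)}$, your plan is to bound $\nm{B'}_{H^p}^p$ by splitting $\D$ into $\{|B|\ge c\}$ and $\{|B|<c\}$ and, on the former, to control $(1-|B(z)|^2)/(1-|z|)$ ``because the boundary values of $B$ are unimodular.'' This is circular. What that step requires is, after converting to an area integral, finiteness of $\int_{\D}(1-|B(z)|)(1-|z|)^{-1-p}\,dm(z)$ over the region $\{|B|\ge c\}$; but (via \eqref{eq:sobolev} and Fubini) finiteness of that integral over all of $\D$ is essentially \emph{equivalent} to $B'\in H^p$ -- it is exactly how the implication $\mathrm{(a)}\Rightarrow\mathrm{(b)}$ is proved. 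It does not follow from $I(c)<\infty$ for a single $c$: one would need quantitative control of $I(c')$ as $c'\to 1$, which the hypothesis does not give. The paper avoids the interior estimate altogether: it tiles $\D$ by dyadic top-halves cut into pieces of small pseudo-hyperbolic diameter, takes the centers $z_n^\star$ of the pieces contained in $\{|B|<c\}$, observes that $\sum_n(1-|z_n^\star|)^{1-p}\lesssim I(c)$ so the Blaschke product $B^\star$ with these zeros has $(B^\star)'\in H^p$ by Protas' theorem, and then transfers to $B$ by comparing angular derivatives through the Julia--Carath\'eodory theorem (Lemma~1): wherever $|B^\star|$ is not small, $|B|$ is bounded below, whence $|B'(e^{i\theta})|\lesssim|(B^\star)'(e^{i\theta})|$ a.e. That comparison lemma is the idea your outline is missing.

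For $\mathrm{(c)}\Rightarrow\mathrm{(a)}$, the bridge you propose -- that $\{|B|<c\}$ is, up to bounded overlap, a union of pseudo-hyperbolic disks of fixed radius about the zeros, so that $\int_{\partial\D}F_{\alpha,B}^p$ can be compared directly with $I(c)$ -- is false for non-separated zeros: when zeros cluster, the sublevel set is much larger than any such union (this is precisely why Corollary~2 needs the separation hypothesis). The remainder of your discussion of this implication correctly identifies the obstruction (one cannot use the triangle inequality for $p<1$, and the scales must be decoupled) but offers no mechanism to overcome it. The paper's mechanism is a stopping-time decomposition on the level sets $\{F_{\alpha,B}>2^N\}=\bigcup_j I_j^{(N)}$, splitting the zeros into families $\Lambda_{N,j}$ and reducing, by $p$-subadditivity, to the localized estimate $\bnm{\sum_{z_n\in\Lambda_{N,j}}P_{z_n}}_{L^p}^p\lesssim 2^{Np}|I_j^{(N)}|$; this in turn is proved by a Harnack-inequality counting argument for the dyadic sectors on which the partial product is small, followed by the construction of a second auxiliary Blaschke product $b^\star$ and another application of Lemma~1. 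Without something playing the role of that localized estimate, the implication does not close. (Your routes for $\mathrm{(a)}\Rightarrow\mathrm{(b)}$ and $\mathrm{(a)}\Rightarrow\mathrm{(c)}$ are essentially fine, though the former needs only the two-line Fubini computation with \eqref{eq:sobolev}, not Cohn's contour criterion.)
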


The proof of Theorem~\ref{thm:main} shows that, if (b) holds for a certain $0<c<1$,
it is also satisfied for any $0<c<1$. Similarly, 
if (c) is satisfied for a certain $\alpha >1$, it also holds for any other choice of $\alpha>1$. Observe that 
Theorem \ref{thm:main} does not hold for $p \leq 1/2$. Actually, if $B$ is a Blaschke product with a~single zero 
$z_0 \in \D$ and $F=F_{\alpha, B}$ is the corresponding function defined in \eqref{eq:defF}, both 
$\nm{F}_{L^p(\partial\D)}^p$ and $I(c)$ are comparable to $ (1-|z_0|)^{1-p}$ for any $0<p<1$, while $\nm{B'}_{H^p}^p$ 
is comparable to $(1-|z_0|)^p$ if $p< 1/2$ and to $(1-|z_0|)^{1/2} (1+ \ln(1-|z_0|)^{-1} )$ if $p=1/2$. 

Recall that $\{z_n\} \subset \D$ is separated (with respect to the pseudo-hyperbolic metric) if
there exists a~constant $\delta=\delta(\{z_n\}) >0$ such that $\varrho(z_n,z_k) >\delta$ for all $ n\neq k$.
Here $\varrho(z,w)=|z-w|/|1-\overline{z}{w}|$ denotes the pseudo-hyperbolic distance between
the points $z,w\in\D$. If the zeros, counting multiplicities, of a Blaschke product form a separated sequence, the 
function $F$ can be estimated pointwise and the condition (c) in Theorem \ref{thm:main} is equivalent to \eqref{eq:protas}.

%%%%%%%%%%%%%%%%%%%%%%%%%%
%%%% ---- COROLLARY ---- %%%%
%%%%%%%%%%%%%%%%%%%%%%%%%%

\begin{corollary} \label{cor:cohnimp}
If $1/2<p<1$ and $B$ is a Blaschke product with
separated zeros~$\{z_n\}$, then $B'\in H^p$ if and only if 
$\{z_n\}$ satisfies \eqref{eq:protas}.
\end{corollary}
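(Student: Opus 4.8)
The plan is to deduce Corollary~\ref{cor:cohnimp} from the equivalence of (a) and (c) in Theorem~\ref{thm:main}, so the only real work is to show that for a \emph{separated} sequence $\{z_n\}$ one has
\[
  F_{\alpha,B}\in L^p(\partial\D) \iff \sum_n (1-|z_n|)^{1-p}<\infty .
\]
First I would record the geometric fact underlying the claim: if $\{z_n\}$ is $\delta$-separated, then for each fixed $\alpha>1$ every Stolz angle $\Gamma_\alpha(e^{i\theta})$ contains at most $N=N(\alpha,\delta)$ of the $z_n$ with $(1-|z_n|)$ of any given dyadic size, i.e.\ the number of zeros in $\Gamma_\alpha(e^{i\theta})\cap\{2^{-k-1}\le 1-|z|\le 2^{-k}\}$ is bounded by a constant independent of $k$ and $\theta$. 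This is because the pseudohyperbolic disc $\varrho(z_n,\cdot)<\delta/2$ has area comparable to $(1-|z_n|)^2$, these discs are disjoint, and a Stolz angle intersected with one dyadic annulus has area comparable to $2^{-2k}$. Consequently, inside $\Gamma_\alpha(e^{i\theta})$ the terms $1/(1-|z_n|)$ at each dyadic scale are summable with geometric ratio, so that
\[
  F_{\alpha,B}(e^{i\theta}) \asymp \sup_{z_n\in\Gamma_\alpha(e^{i\theta})} \frac{1}{1-|z_n|},
\]
with constants depending only on $\alpha$ and $\delta$; that is, the sum defining $F_{\alpha,B}$ is, up to constants, just its largest term.

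Given this pointwise two-sided estimate, I would integrate. For the direction ``\eqref{eq:protas} $\Rightarrow$ $F_{\alpha,B}\in L^p$'': using $F_{\alpha,B}(e^{i\theta})^p \le \sum_{z_n\in\Gamma_\alpha(e^{i\theta})} (1-|z_n|)^{-p}$ (here the elementary inequality $(\sum a_n)^p\le\sum a_n^p$ for $0<p<1$ is used, which needs no separation), interchange sum and integral to get
\[
  \int_{\partial\D} F_{\alpha,B}^p \, \frac{|d\theta|}{2\pi}
  \le \sum_n \frac{1}{(1-|z_n|)^p}\cdot \bigl|\{e^{i\theta}: z_n\in\Gamma_\alpha(e^{i\theta})\}\bigr|,
\]
and observe that the arc of $e^{i\theta}$ whose Stolz angle of aperture $\alpha$ contains a given $z_n$ has length comparable to $1-|z_n|$ (with constant depending on $\alpha$). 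This yields $\int F_{\alpha,B}^p \lesssim \sum_n (1-|z_n|)^{1-p}$, and in fact this half requires no separation at all. For the converse, ``$F_{\alpha,B}\in L^p$ $\Rightarrow$ \eqref{eq:protas}'', I would use the lower bound $F_{\alpha,B}(e^{i\theta}) \gtrsim (1-|z_n|)^{-1}$ for every $e^{i\theta}$ with $z_n\in\Gamma_\alpha(e^{i\theta})$; for each $n$ fix such an arc $J_n$ of length $\asymp 1-|z_n|$, and then
\[
  \int_{\partial\D} F_{\alpha,B}^p\,\frac{|d\theta|}{2\pi} \ \gtrsim\ \int_{J_n} F_{\alpha,B}^p \ \gtrsim\ \frac{|J_n|}{(1-|z_n|)^p}\ \asymp\ (1-|z_n|)^{1-p}.
\]
Summing over $n$ is not immediately legitimate because the arcs $J_n$ overlap, so here I would invoke the separation to control the overlap: a $\delta$-separated sequence has bounded overlap when grouped by dyadic scale (at a fixed scale the $J_n$ are essentially disjoint), and across scales one sums a convergent geometric-type series exactly as in the first paragraph; alternatively, one re-uses the reverse pointwise estimate $F_{\alpha,B}^p \gtrsim \sum_{z_n\in\Gamma_\alpha(e^{i\theta})}(1-|z_n|)^{-p}$ valid for separated sequences, and then the same Fubini computation as above runs backwards to give $\int F_{\alpha,B}^p \gtrsim \sum_n (1-|z_n|)^{1-p}$.

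I expect the main obstacle to be the bounded-overlap bookkeeping in the converse direction: without separation the lower estimate genuinely fails (the one-zero example at the end of the theorem shows $\|F\|_{L^p}^p \asymp (1-|z_0|)^{1-p}$ is sharp, but stacking many nearby zeros makes $F$ much larger than $\sum (1-|z_n|)^{1-p}$ would predict only if they are clustered), so the proof must pin down precisely how $\delta$-separation converts the sum in \eqref{eq:defF} into its dominant term and simultaneously makes the associated arcs $\{J_n\}$ finitely overlapping at each dyadic level. Once that geometric lemma is isolated and proved, the corollary follows by chaining it with the (a)$\Leftrightarrow$(c) equivalence of Theorem~\ref{thm:main}; I would also note in passing that combining with (b) recovers Cohn's and Carleson's conditions, so no extra argument is needed for those.
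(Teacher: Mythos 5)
Your proposal is correct and follows essentially the same route as the paper: the forward implication needs no separation (the paper simply cites Protas, while you route it through Theorem~\ref{thm:main}(c)$\Rightarrow$(a)), and for the converse both arguments rest on the same geometric fact that a $\delta$-separated sequence meets each dyadic annulus inside a Stolz angle in at most $S(\alpha,\delta)$ points, so that a geometric summation makes $\sum_{z_n\in\Gamma_\alpha(e^{i\theta})}(1-|z_n|)^{-p}$ comparable to the $p$-th power of the dominant term, after which Fubini gives \eqref{eq:protas}. The paper packages that dominant term as $F^\star(e^{i\theta})=\bigl(\inf_{z_n\in\Gamma_\alpha(e^{i\theta})}|e^{i\theta}-z_n|\bigr)^{-1}$ and bounds it directly by $\alpha^2\,|B'(e^{i\theta})|$ via Frostman's formula \eqref{eq:Bprimerep}, which carries the same content as your appeal to the easy direction (a)$\Rightarrow$(c) of Theorem~\ref{thm:main}.
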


Although Corollary~\ref{cor:cohnimp} can be deduced from our main result,
we will give an~independent proof for it. 
Corollary~\ref{cor:cohnimp} admits an immediate generalization to finite unions of separated 
sequences and was originally proved by Protas in \cite[Theorem~3]{P:2011}. 
%\cite[Theorem~1]{C:1983} and \cite[Theorem~5(a)]{V:1984} 
%under the stronger assumption of \emph{uniform separation}.  
The following result, due to Verbitski\u{\i},
see \cite[Theorem~K]{GGJ:2011} and \cite[Theorem~5(b)]{V:1984}, arises as another consequence 
of Theorem~\ref{thm:main}.

%%%%%%%%%%%%%%%%%%%%%%%%%%
%%%% ---- COROLLARY ---- %%%%
%%%%%%%%%%%%%%%%%%%%%%%%%%

\begin{corollary} \label{cor:m1}
Let $1/2<p<1$ and  $1<\beta<\infty$. Let $B$ be a Blaschke product whose zeros 
$\{z_n\}$ are contained in a Stolz angle $\Gamma_\beta(e^{i\theta})$ for some $e^{i\theta}\in\partial\D$. 
Consider $N_j = \#\{z_n : 2^{-j} < 1- |z_n| \leq 2^{-j+1} \}$ for $j\in\N$.
Then $B'\in H^p$ if and only if 
\begin{equation*}
  \sum_{j=1}^\infty 2^{-j(1-p)} N_j^p < \infty.
\end{equation*}
Moreover, there exists a constant $C=C(\beta , p) >0$ such that 
\begin{equation*}
  C^{-1} \sum_{j=1}^\infty 2^{-j(1-p)} N_j^p 
  \, \leq \nm{B'}_{H^p}^p \leq 
  C \, \sum_{j=1}^\infty 2^{-j(1-p)} N_j^p.
\end{equation*}
\end{corollary}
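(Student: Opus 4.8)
The plan is to derive Corollary~\ref{cor:m1} from Theorem~\ref{thm:main}(c) by evaluating the $L^p$ norm of the counting function $F_{\alpha,B}$ when all zeros lie in a single Stolz angle. First I would fix $\alpha$ large enough (depending on $\beta$) so that Theorem~\ref{thm:main} applies with this $\alpha$, and reduce to the case $e^{i\theta}=1$. The key geometric observation is that for a point $e^{it}$ on the circle, a zero $z_n$ with $2^{-j}<1-|z_n|\le 2^{-j+1}$ lies in $\Gamma_\alpha(e^{it})$ essentially when $|t-\arg z_n| \lesssim 2^{-j}$; since all zeros are clustered near $1$, the arcs $\{t : z_n\in\Gamma_\alpha(e^{it})\}$ for the $N_j$ zeros at generation $j$ all have length comparable to $2^{-j}$ and sit inside a fixed arc around $t=0$ of length comparable to $2^{-j}$ (because the zeros themselves lie in $\Gamma_\beta(1)$, forcing $|\arg z_n|\lesssim 1-|z_n|\le 2^{-j+1}$). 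Hence on that arc the contribution of generation $j$ to $F_{\alpha,B}(e^{it})$ is comparable to $N_j\,2^{j}$, spread over a set of measure $\asymp 2^{-j}$.

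The next step is to sum over generations. Writing $F_j(e^{it})$ for the generation-$j$ piece, one has $F_{\alpha,B}=\sum_j F_j$, each $F_j$ is supported on an arc $E_j$ of length $\asymp 2^{-j}$ containing $t=0$, and $F_j \asymp N_j 2^j$ on a large portion of $E_j$. Since $0<p<1$, the inequality $\bigl(\sum_j a_j\bigr)^p \le \sum_j a_j^p$ gives the upper bound
\begin{equation*}
  \int_{\partial\D} F_{\alpha,B}^p \, d\theta \;\le\; \sum_j \int_{E_j} F_j^p \, d\theta \;\lesssim\; \sum_j 2^{-j}\,(N_j 2^j)^p \;=\; \sum_j 2^{-j(1-p)} N_j^p.
\end{equation*}
For the lower bound one cannot simply use superadditivity of $t\mapsto t^p$; instead I would exploit that the arcs $E_j$ are nested and shrinking geometrically, so that on the annular region $E_j\setminus E_{j+1}$ (of measure still $\asymp 2^{-j}$) the dominant term in $F_{\alpha,B}$ is the generation-$j$ term $F_j\asymp N_j 2^j$ — here one uses that the earlier generations $k<j$ contribute $O(N_k 2^k)$ which, once we restrict to $|t|\asymp 2^{-j}$, is controlled, and the later generations contribute a convergent tail. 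This yields $\int_{\partial\D} F_{\alpha,B}^p\, d\theta \gtrsim \sum_j 2^{-j(1-p)} N_j^p$, possibly after absorbing a few boundary generations. Combining both bounds with Theorem~\ref{thm:main} gives the norm comparison with a constant $C=C(\beta,p)$.

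The main obstacle is the lower bound: separating the generations so that on each annular shell one generation genuinely dominates requires care, because within a single Stolz angle the contributions from \emph{coarser} generations $k<j$ do not vanish on $E_j\setminus E_{j+1}$ — a zero at level $k$ still ``sees'' every $e^{it}$ with $|t|\lesssim 2^{-k}$. The way around this is a standard dyadic bookkeeping: either show that $\sum_{k\le j} N_k 2^k \lesssim \max_{k\le j}(N_k 2^k)$ after passing to a subsequence of dominant generations, or run a Whitney-type decomposition of the arc $\{|t|\le \text{const}\}$ so that each Whitney interval of length $\asymp 2^{-j}$ is charged essentially to generation $j$ alone. This is precisely the kind of argument already implicit in the proof of Theorem~\ref{thm:main}, so the cleanest route is to mirror that argument in this special geometry rather than re-derive everything; alternatively, one can cite Verbitski\u{\i}'s original statement and simply indicate how it matches condition (c).
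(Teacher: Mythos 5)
Your strategy coincides with the paper's: reduce to condition (c) of Theorem~\ref{thm:main} with $\alpha$ chosen in terms of $\beta$, decompose $\partial\D$ into dyadic shells $I_k=\{e^{it}: 2^{-k+1}<|e^{it}-1|\le 2^{-k+2}\}$ around the vertex, prove the upper bound via $(\sum_j a_j)^p\le\sum_j a_j^p$, and prove the lower bound shell by shell. The upper bound is fine. The place where you stall, however --- the lower bound --- is not a genuine obstacle, and your diagnosis of it is slightly off. You do not need the generation-$j$ term to \emph{dominate} $F_{\alpha,B}$ on the shell at scale $2^{-j}$; indeed it need not, since coarser generations $k<j$ can contribute far more there (a zero at height $\asymp 2^{-k}$ is seen from every $|t|\lesssim 2^{-k}$), and your proposed dominant-subsequence bookkeeping or Whitney decomposition is not actually carried out. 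All that is needed is the one-sided pointwise bound $F_{\alpha,B}(e^{it})\ge F_j(e^{it})\gtrsim N_j\,2^{j}$ for $|t|\asymp 2^{-j}$, which holds because for $\alpha$ large enough (the paper takes $\alpha=2\beta$ and allows a bounded index shift $k_2$) \emph{every} generation-$j$ zero lies in $\Gamma_\alpha(e^{it})$ for such $t$. Since $t\mapsto t^p$ is increasing, this gives $\int_{I_{j+k_2}}F_{\alpha,B}^p\,d\theta\gtrsim 2^{-j}(N_j2^{j})^p=2^{-j(1-p)}N_j^p$, and summing over the pairwise disjoint shells finishes the lower bound. This is exactly what the paper does: on the shell it writes $F\ge\tfrac12\sum_{k\le j}2^{k}N_k$ and simply retains the single term $k=j$ before raising to the power $p$. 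So replace your closing appeal to ``mirror Theorem~\ref{thm:main}'' or cite Verbitski\u{\i} by this one-line observation. A minor further correction: the finer generations $k>j+O(1)$ contribute nothing at all on the shell at scale $2^{-j}$ (their zeros are too close to $1$ to lie in $\Gamma_\alpha(e^{it})$ when $|t|\asymp 2^{-j}$), so there is no ``convergent tail'' to control in the first place.
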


As a consequence of Theorem \ref{thm:main} we obtain a proof for the following 
beautiful result proved in \cite[Theorem 6.2]{A:1979} and \cite[Theorem 4.3]{C:1950}.

%%%%%%%%%%%%%%%%%%%%%%%%%%
%%%% ---- COROLLARY ---- %%%%
%%%%%%%%%%%%%%%%%%%%%%%%%%

\begin{corollary} \label{cor:m2}
Let $1/2<p<1$ and $B$ be a Blaschke product. Then $B' \in H^p$ if and only if there exists a point 
$a \in \D$ such that $(a-B)/(1-\overline{a}B)$ is a Blaschke product and its zeros $B^{-1} (\{a\}) = \{z_n (a) \}$ satisfy 
\begin{equation}\label{eq:preimages}
  \sum_n (1-|z_n (a)|)^{1-p} < \infty.
\end{equation}
Moreover, if $B' \in H^p$ then for all $a \in \D$, except possibly for a set of logarithmic capacity zero, 
the preimages $B^{-1} (\{a\}) = \{z_n (a) \}$ satisfy \eqref{eq:preimages}. 
\end{corollary}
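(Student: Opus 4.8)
The plan is to deduce Corollary~\ref{cor:m2} from Theorem~\ref{thm:main} by exploiting the invariance of the class of Blaschke products with derivative in $H^p$ under the M\"obius transformations $w \mapsto (a-w)/(1-\overline{a}w)$ of the target disc, and then relating condition (b) of the theorem to the counting condition \eqref{eq:preimages} on the fibers $B^{-1}(\{a\})$. First I would observe that for any $a\in\D$ the function $B_a := (a-B)/(1-\overline{a}B)$ satisfies $|B_a'| \asymp |B'|$ with constants depending only on $|a|$, since the derivative of the M\"obius map $\phi_a(w)=(a-w)/(1-\overline{a}w)$ is bounded above and below on $\overline{\D}$; hence $B'\in H^p$ if and only if $B_a'\in H^p$ for one (equivalently every) $a\in\D$. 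By the classical fact that the set of $a\in\D$ for which $B_a$ fails to be a Blaschke product (equivalently, for which the singular part of $B_a$ is nontrivial) has logarithmic capacity zero --- this is exactly the content of Frostman's theorem --- we may restrict attention to those $a$ for which $B_a$ is a genuine Blaschke product, whose zeros are precisely $B^{-1}(\{a\})=\{z_n(a)\}$.

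Next I would apply Theorem~\ref{thm:main} to $B_a$: its derivative lies in $H^p$ if and only if condition (b) holds, i.e. $I_a(c):=\int_{\{|B_a(z)|<c\}}(1-|z|)^{-1-p}\,dm(z)<\infty$ for some (hence every) $0<c<1$. The key point is that, since $\phi_a$ is a bi-Lipschitz self-map of $\D$ with respect to the pseudo-hyperbolic metric, the sublevel set $\{|B_a|<c\} = \{|\phi_a(B)|<c\}$ is comparable, in the sense of the integral $I_a(c)$, to a union of pseudo-hyperbolic discs of fixed radius centered at the points of $B^{-1}(\{a\})$; more precisely $\{z : \varrho(B(z),a)<c\}$ contains and is contained in such unions for slightly different radii. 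Since $\int_{D(z_n(a),r)}(1-|z|)^{-1-p}\,dm(z) \asymp (1-|z_n(a)|)^{1-p}$ for a fixed pseudo-hyperbolic radius $r$, summing (and controlling the overlaps, which contribute only a bounded multiplicative factor because the measure $(1-|z|)^{-1-p}\,dm(z)$ assigns comparable mass to any two nested pseudo-hyperbolic discs of comparable radii) yields $I_a(c) \asymp \sum_n (1-|z_n(a)|)^{1-p}$. Thus condition (b) for $B_a$ is equivalent to \eqref{eq:preimages}, which proves the first assertion: $B'\in H^p$ iff there exists such an $a$.

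For the final, quantitative assertion, suppose $B'\in H^p$. Then by the first part (applied with the fixed choice of $c$), for \emph{every} $a\in\D$ the integral $I_a(c)$ is finite, because $B_a'\in H^p$ always holds and Theorem~\ref{thm:main} gives $I_a(c)\asymp \nm{B_a'}_{H^p}^p \asymp \nm{B'}_{H^p}^p$. Hence for every $a$ outside the capacity-zero exceptional set (on which $B_a$ is a Blaschke product), the comparison $I_a(c)\asymp \sum_n (1-|z_n(a)|)^{1-p}$ forces \eqref{eq:preimages}.

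The main obstacle I anticipate is making the comparison between the sublevel set $\{|B_a|<c\}$ and the union of pseudo-hyperbolic discs around $B^{-1}(\{a\})$ rigorous and uniform in $a$: one must check that the bi-Lipschitz constant of $\phi_a$ in the pseudo-hyperbolic metric, together with the local geometry of $B$ near its $a$-points, does not degenerate, and that overlapping discs do not inflate the integral by more than a bounded factor. This is essentially a Schwarz--Pick estimate combined with a covering argument, but it requires care because the multiplicities of the $a$-points and the density of the fiber are not controlled a priori --- here one uses that $I_a(c)<\infty$ (already guaranteed) to ensure the fiber is a Blaschke sequence and that the relevant discs have finite total mass.
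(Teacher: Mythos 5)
Your reduction of the first claim to Protas's condition \eqref{eq:protas} via $|B'|\asymp|(\tau_a\circ B)'|$ is fine, but the crux of your argument --- the asserted two-sided comparison $I_a(c)\asymp\sum_n(1-|z_n(a)|)^{1-p}$ --- is false, and the obstacle you flag at the end (uncontrolled multiplicities and clustering of the fiber) is fatal rather than technical. Only one inclusion holds: the pseudo-hyperbolic discs $D(z_n(a),c)$ sit inside $\{|B_a|<c\}$, but their overlaps are not bounded, so $I_a(c)$ does not dominate the sum. Concretely, if $B_a$ has a zero of multiplicity $N$ at $z_0$ (and no others nearby), then $\{|B_a|<c\}=\{\varrho(z,z_0)<c^{1/N}\}$ and a computation gives $I_a(c)\simeq N^{-(1-p)}(1-|z_0|)^{1-p}$, while $\sum_n(1-|z_n(a)|)^{1-p}=N(1-|z_0|)^{1-p}$; the ratio is unbounded in $N$. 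More structurally: were your comparison true, then taking $a=0$ and combining with Theorem~\ref{thm:main}(b) would show that $B'\in H^p$ \emph{implies} \eqref{eq:protas} for the actual zeros of $B$, i.e.\ that Protas's sufficient condition is also necessary --- which the paper explicitly notes is not the case. It would also yield \eqref{eq:preimages} for \emph{every} $a$ with $B_a$ a Blaschke product, whereas the corollary only claims it for quasi-every $a$; the capacity-zero exceptional set exists precisely because the pointwise statement fails for some~$a$.

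The paper's proof of the ``moreover'' part therefore does not compare sublevel sets with unions of discs. It uses Carleson's characterization, stated in the introduction: \eqref{eq:preimages} is equivalent to finiteness of $F(a)=\int_\D \log|(\tau_a\circ B)(z)|^{-1}(1-|z|)^{-1-p}\,dm(z)$ (note the weight $\log|B_a|^{-1}$, which also records where $|B_a|$ is close to $1$, unlike $\ch_{\{|B_a|<c\}}$). One then averages $F(a)$ against an arbitrary probability measure $\sigma$ supported in $\{|a|\le c\}$ with bounded logarithmic potential $\sup_z\int\log|\tau_a(z)|^{-1}\,d\sigma(a)<\infty$: on $E=\{|B|\le(1+c)/2\}$ the inner integral is bounded by that potential and one invokes Theorem~\ref{thm:main}(b); off $E$ one has $\log|\tau_a(B(z))|^{-1}\lesssim 1-|B(z)|$ and concludes via \eqref{eq:sobolev}. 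This shows $F(a)<\infty$ $\sigma$-a.e.\ for every such $\sigma$, hence outside a set of logarithmic capacity zero. To repair your write-up you would need to replace the disc-covering comparison by this (or an equivalent) averaging argument; the rest of your outline can stay.
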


Consider the collection of dyadic arcs $\{ e^{i\theta} \in \partial\D :  (j-1)2^{-n+2} \pi \leq \theta \leq j 2^{-n+2} \pi \}$, 
$ j=1,\dotsc, 2^{n-1}$, $n \in \N$, of the unit circle. Given an arc $I \subset \partial \D$ consider the sector 
$Q= Q(I) = \{ re^{i\theta}  : e^{i\theta}\in I, \, 1-|I|\leq r < 1 \}$, whose base is the arc $I$,  and its top part 
$T( Q(I) ) = \big\{ re^{i\theta}  : e^{i\theta}\in I, \, 1-|I|\leq r <  1- |I|/2 \big\}$.  Here $|I| = \ell(Q)$
denotes the normalized arc length of $I$. Let $\mathcal{E}$ be the collection of dyadic sectors of $\D$, that is, 
sectors whose base is a dyadic arc of $\partial\D$. 

Part (c) of Theorem \ref{thm:main} gives a description of Blaschke products in certain Hardy-Sobolev spaces in terms 
of the function $F_{\alpha, B}$ which is defined on the unit circle. There is a similar description in $\D$. 
Given a sequence $\{z_n \}\subset \D$ and a sector~$Q\in\mathcal{E}$, let 
$N(Q)$ be the number of points in $\{z_n \} \cap T(Q)$, that is, $N(Q) = \# \{n: z_n \in  T(Q) \} $. 
Consider the function
\begin{equation*}
  F(Q)= \sum \frac{N(R)}{\ell (R)}, \quad Q\in\mathcal{E},
\end{equation*}
where the sum is taken over all sectors $R\in\mathcal{E}$ for which $\ell(R)\geq \ell(Q)$ and $Q\subset 3R$.
Here $3R$ is a sector whose base is concentric to that of $R$ and  $\ell(3R)=3 \, \ell(R)$.
For $N \in \N$, let ${\mathcal{E}}_N$ 
be the collection of maximal sectors $Q \in \mathcal{E}$ such that $F(Q)> 2^N$. Observe that the 
maximality gives $N(Q)>0$ for any $Q \in {\mathcal{E}}_N$. We denote 
$\ell ({\mathcal{E}}_N) = \sum \ell (Q)$, where the sum runs over all sectors 
$Q\in{\mathcal{E}}_N$. Our next result is a reformulation of part (c) of Theorem \ref{thm:main}.

%%%%%%%%%%%%%%%%%%%%%%%%%%
%%%% ---- COROLLARY ---- %%%%
%%%%%%%%%%%%%%%%%%%%%%%%%%

\begin{corollary} \label{thm:F}
Let $1/2<p < 1$ and $B$ be a Blaschke product with zeros~$\{z_n\}$. Then $B'\in H^p$ if and only if 
\begin{equation} \label{eq:F}
  \sum_{N=1}^\infty 2^{Np} \,  \ell({\mathcal{E}}_N) <   \infty.
\end{equation}
\end{corollary}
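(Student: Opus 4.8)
The plan is to recognise the sum in \eqref{eq:F} as, up to multiplicative constants, the $p$-th power of the $L^p(\partial\D)$-norm of a dyadic analogue of the function $F_{\alpha,B}$ appearing in Theorem~\ref{thm:main}, and then to invoke the equivalence of (a) and (c) there. Put
\[
  G(e^{i\theta}) = \sup\set{ F(Q) : Q\in\mathcal{E},\ e^{i\theta}\in Q }, \qquad e^{i\theta}\in\partial\D .
\]
The structural fact underlying the whole argument is that $F$ is monotone with respect to inclusion of dyadic sectors: if $Q,Q'\in\mathcal{E}$ with $Q'\subset Q$, then every sector $R$ admissible in the definition of $F(Q)$ --- that is, $\ell(R)\geq\ell(Q)$ and $Q\subset 3R$ --- is also admissible for $F(Q')$, whence $F(Q')\geq F(Q)$. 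Consequently, for every $e^{i\theta}\in\partial\D$ which is not a dyadic endpoint, the supremum defining $G(e^{i\theta})$ is attained as an increasing limit along the chain of dyadic sectors shrinking to $e^{i\theta}$, and monotone convergence gives
\[
  G(e^{i\theta}) = \sum_{R\in\mathcal{E}\,:\, e^{i\theta}\in 3R} \frac{N(R)}{\ell(R)}
  = \sum_{n\,:\, e^{i\theta}\in 3R_n} \frac{1}{\ell(R_n)} \qquad \text{for a.e.\ } e^{i\theta}\in\partial\D ,
\]
where $R_n\in\mathcal{E}$ is the dyadic sector whose top part $T(R_n)$ contains $z_n$ (the top parts $T(Q)$, $Q\in\mathcal{E}$, tile $\D$), the relation $e^{i\theta}\in 3R$ is read as membership of $e^{i\theta}$ in the base arc of $3R$, and $1-|z_n|\leq\ell(R_n)<2(1-|z_n|)$.

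Next I would note that the base arcs of the maximal sectors constituting ${\mathcal E}_N$ are pairwise disjoint and, using the monotonicity of $F$ once more, that every dyadic $Q$ with $F(Q)>2^N$ lies below a unique maximal one; hence the union of these base arcs is exactly $\set{ e^{i\theta} : G(e^{i\theta})>2^N }$, so that $\ell({\mathcal E}_N) = \abs{\set{ G>2^N }}$ for every $N\in\N$. Since $\abs{\set{ G>2^N }}$ never exceeds the total mass of $\partial\D$, the dyadic layer-cake estimate
\[
  \nm{G}_{L^p(\partial\D)}^p \ \asymp \ \sum_{N\in\Z} 2^{Np}\, \abs{\set{ G>2^N }}
\]
(whose terms with $N\leq 0$ form a convergent geometric series) shows that \eqref{eq:F} holds if and only if $G\in L^p(\partial\D)$.

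It then remains to compare $G$ with $F_{\alpha,B}$. A routine planar computation --- using that $\ell(R_n)$ is comparable to $1-|z_n|$, together with the standard fact that $|e^{i\theta}-z_n|$ is comparable to $(1-|z_n|)$ plus the distance from $e^{i\theta}$ to the radial projection of $z_n$ --- produces absolute constants $1<\alpha_0\leq\alpha_1<\infty$ and bounded functions $M_0,M_1$ on $\partial\D$ (accounting for the finitely many zeros with $|z_n|$ not close to $1$) such that
\[
  \tfrac{1}{2}\, F_{\alpha_0,B}(e^{i\theta}) \leq G(e^{i\theta})+M_0(e^{i\theta}), \qquad G(e^{i\theta}) \leq F_{\alpha_1,B}(e^{i\theta})+M_1(e^{i\theta})
\]
for a.e.\ $e^{i\theta}\in\partial\D$. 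The first inequality rests on the observation that $z_n\in\Gamma_{\alpha_0}(e^{i\theta})$ forces $e^{i\theta}$ into the base arc of $3R_n$ once $\alpha_0$ is chosen small enough, which is possible precisely because the dilation factor $3$ is strictly larger than $1$; the second rests on the converse observation that $e^{i\theta}$ in the base arc of $3R_n$ places $z_n$ in a Stolz angle of a fixed large aperture $\alpha_1$ with vertex at $e^{i\theta}$. Since $\alpha_0,\alpha_1>1$, Theorem~\ref{thm:main} yields $F_{\alpha_0,B}\in L^p(\partial\D)\iff B'\in H^p\iff F_{\alpha_1,B}\in L^p(\partial\D)$, so the chain of implications $G\in L^p\Rightarrow F_{\alpha_0,B}\in L^p\Rightarrow B'\in H^p\Rightarrow F_{\alpha_1,B}\in L^p\Rightarrow G\in L^p$ shows that $G\in L^p(\partial\D)$ if and only if $B'\in H^p$. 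Combined with the previous paragraph, this proves that \eqref{eq:F} is equivalent to $B'\in H^p$.

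I expect the third step to be the crux: establishing the pointwise formula for $G$ as a sum over the zeros $\set{z_n}$ and carrying out its comparison with $F_{\alpha,B}$. In particular one must check that the fixed dilation factor $3$ in the definition of $F(Q)$ is large enough for the lower comparison to hold with some admissible aperture $\alpha_0>1$, and one must keep careful track of the measure-zero sets --- dyadic endpoints, and endpoints of the arcs $3R$, $R\in\mathcal{E}$ --- on which the limit formula for $G$ and the inclusions above may fail.
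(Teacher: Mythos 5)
Your proof is correct and follows essentially the same route as the paper's: both directions reduce to part (c) of Theorem~\ref{thm:main} by comparing the dyadic quantity $F(Q)$ with $F_{\alpha,B}$ for a suitably small and a suitably large aperture, exploiting the fact that the theorem holds for every $\alpha>1$. The only cosmetic difference is that you package the comparison pointwise through the auxiliary function $G$ and the identity $\ell(\mathcal{E}_N)=\abs{\set{G>2^N}}$, whereas the paper works directly with the distributional inclusions $I\subset\set{F_{\alpha,B}>2^N}$ for $Q(I)\in\mathcal{E}_N$ and $\set{F_{\alpha,B}>2^{N+1}}\subset\bigcup 2I$.
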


Given a sector $Q_0\in \mathcal{E}$ and $0< \varepsilon < 1$, 
let $\mathcal{A}_{\varepsilon} (Q_0) $ be the family of those dyadic sectors $Q \subset Q_0$, $Q \neq Q_0$, for which
\begin{equation*}
  (1- \varepsilon) \, \frac{N(Q_0)}{\ell(Q_0)} \leq  \frac{N(Q)}{\ell(Q)} \leq (1+ \varepsilon) \, \frac{N(Q_0)}{\ell(Q_0)} . 
\end{equation*}
Part (a) of the next result provides a sufficient condition for a Blaschke product to be in a Hardy-Sobolev space,  
which as stated in part (b), is also necessary under a suitable assumption.

%%%%%%%%%%%%%%%%%%%%%%%%%%
%%%% ---- COROLLARY ---- %%%%
%%%%%%%%%%%%%%%%%%%%%%%%%%

\begin{corollary} \label{thm:protasimp}
Let $1/2<p < 1$ and $B$ be a Blaschke product with zeros $\{z_n\}$.

\begin{enumerate}
\item[\rm (a)]
 If 
\begin{equation} \label{eq:protasrep}
  \sum_{Q \in \mathcal{E}} N(Q)^{p} \ell(Q)^{1-p} < \infty,
\end{equation}
then $B'\in H^p$.

\item[\rm (b)]
Assume there exist constants $C>0 $ and $0< \varepsilon < 1$ 
such that for any sector $Q_0\in\mathcal{E}$ with $N(Q_0) \geq C$,
\begin{equation*} 
  \sum_{Q \in \mathcal{A}_{\varepsilon} (Q_0)} \ell(Q) \leq  C \, \ell(Q_0).
\end{equation*}
If $B' \in H^p$, then the estimate \eqref{eq:protasrep} holds. 
\end{enumerate}
\end{corollary}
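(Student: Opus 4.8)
The plan is to deduce Corollary~\ref{thm:protasimp} from part~(c) of Theorem~\ref{thm:main}, i.e.\ from the equivalence $B'\in H^p \iff F_{\alpha,B}\in L^p(\partial\D)$, by comparing the discrete quantity $F(Q)$ on dyadic sectors with the boundary function $F_{\alpha,B}$. More precisely, I will first record the standard dictionary between Stolz angles and dyadic sectors: for a fixed $\alpha>1$ there are constants $c_1,c_2>0$ so that, for every $e^{i\theta}$,
\begin{equation*}
  c_1 \sum_{Q\ni e^{i\theta}} \frac{N(Q)}{\ell(Q)} \;\leq\; F_{\alpha,B}(e^{i\theta}) \;\leq\; c_2 \sum_{Q\ni e^{i\theta}} \frac{N(Q)}{\ell(Q)},
\end{equation*}
where the sums run over dyadic sectors $Q\in\mathcal{E}$ whose base contains $e^{i\theta}$ (after replacing $Q$ by a fixed dilate $3Q$ to absorb the geometry of the cone); this is just the observation that $z_n\in\Gamma_\alpha(e^{i\theta})$ forces $z_n$ to lie in $T(Q)$ for a bounded number of sectors $Q$ with $e^{i\theta}\in 3Q$, and conversely. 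Consequently $\int_{\partial\D} F_{\alpha,B}^p \,d\theta$ is comparable to $\int_{\partial\D}\big(\sum_{Q\ni e^{i\theta}} N(Q)/\ell(Q)\big)^p d\theta$, and the latter is exactly the object one estimates in terms of the sums appearing in \eqref{eq:F} and \eqref{eq:protasrep}.

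For part~(a), I would bound the $L^p$ norm from above using only the elementary inequality $(\sum a_k)^p\leq\sum a_k^p$ valid for $0<p<1$ and nonnegative $a_k$. Writing $G(e^{i\theta})=\sum_{Q\ni e^{i\theta}} N(Q)/\ell(Q)$, this gives
\begin{equation*}
  \int_{\partial\D} G(e^{i\theta})^p \, d\theta \;\leq\; \int_{\partial\D} \sum_{Q\ni e^{i\theta}} \frac{N(Q)^p}{\ell(Q)^p}\, d\theta \;=\; \sum_{Q\in\mathcal{E}} \frac{N(Q)^p}{\ell(Q)^p}\,\ell(Q) \;=\; \sum_{Q\in\mathcal{E}} N(Q)^p \,\ell(Q)^{1-p},
\end{equation*}
since $\{e^{i\theta}: Q\ni e^{i\theta}\}$ is (a fixed dilate of) the base of $Q$, of measure comparable to $\ell(Q)$. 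Hence \eqref{eq:protasrep} forces $F_{\alpha,B}\in L^p(\partial\D)$, and Theorem~\ref{thm:main}(c) gives $B'\in H^p$. (The same computation also yields one half of Corollary~\ref{thm:F}, but that is not needed here.)

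For part~(b) the inequality goes the other way, and this is where the hypothesis on $\mathcal{A}_\varepsilon(Q_0)$ enters; this step is the main obstacle. The point is that for general $0<p<1$ one cannot reverse $(\sum a_k)^p\leq \sum a_k^p$, so a pointwise lower bound of the form $G(e^{i\theta})^p\gtrsim \sum_{Q\ni e^{i\theta}} N(Q)^p/\ell(Q)^p$ is false without extra structure. The separation-type assumption in (b) is precisely what prevents the masses $N(Q)/\ell(Q)$ from ``piling up'' along a long chain of nested sectors of nearly the same density: it guarantees that, apart from a set of sectors of total length $\leq C\,\ell(Q_0)$ inside each $Q_0$, the densities drop geometrically, so that along any ray $e^{i\theta}$ the dominant term in $G(e^{i\theta})$ is comparable to the full sum $\sum_Q N(Q)/\ell(Q)$ restricted to the sectors that genuinely contribute, i.e.\ $G(e^{i\theta})^p\gtrsim \sum_{Q}N(Q)^p/\ell(Q)^p$ after discarding the exceptional sectors. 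I would make this precise by a stopping-time / maximal-sector argument: fix a sector $Q_0$ with $N(Q_0)$ large, let $\{Q_i\}$ be the maximal dyadic subsectors of $Q_0$ with $N(Q_i)/\ell(Q_i) < (1-\varepsilon) N(Q_0)/\ell(Q_0)$, note the complementary region (where the density stays $\geq(1-\varepsilon)N(Q_0)/\ell(Q_0)$) has measure $\leq C\,\ell(Q_0)$ by hypothesis, and on its base $G\gtrsim N(Q_0)/\ell(Q_0)$, contributing $\gtrsim N(Q_0)^p\ell(Q_0)^{1-p}$ to $\int G^p$; summing this over a suitable Calder\'on--Zygmund-type decomposition of $\mathcal{E}$ into almost-disjoint such $Q_0$'s (together with the finitely many sectors with $N(Q_0)<C$, which contribute a bounded amount) recovers $\sum_{Q} N(Q)^p\ell(Q)^{1-p}\lesssim \int_{\partial\D}G^p\,d\theta \approx \nm{B'}_{H^p}^p$. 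Combining with Theorem~\ref{thm:main}(c) then yields \eqref{eq:protasrep}, completing the proof.
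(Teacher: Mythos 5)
Your part (a) is correct and essentially as easy as the paper's own argument: the paper applies the subadditivity of $t\mapsto t^p$ directly to the pointwise bound \eqref{eq:bderind}, replacing the kernel at each $z_n\in T(Q)$ by the one at $z_Q$ and computing $\int_0^{2\pi}(1-|z_Q|^2)^p\,|1-\overline{z_Q}re^{i\theta}|^{-2p}\,d\theta\simeq \ell(Q)^{1-p}$ (this is where $2p>1$ enters), whereas you apply the same subadditivity to $F_{\alpha,B}$ and then invoke Theorem~\ref{thm:main}(c). Both routes work.

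Part (b), however, has genuine gaps. First, the claim that ``the complementary region, where the density stays $\geq(1-\varepsilon)N(Q_0)/\ell(Q_0)$, has measure $\leq C\ell(Q_0)$ by hypothesis'' is not what the hypothesis gives you: $\mathcal{A}_{\varepsilon}(Q_0)$ consists only of sectors whose density lies in the two-sided band $[(1-\varepsilon),(1+\varepsilon)]\cdot N(Q_0)/\ell(Q_0)$, so sectors of much \emph{higher} density are completely uncontrolled, and they too keep the density above $(1-\varepsilon)N(Q_0)/\ell(Q_0)$. Second, and more fundamentally, the summation step is missing. For every single $Q_0$ one trivially has $\int_{I_{Q_0}}G^p\,d\theta\geq N(Q_0)^p\ell(Q_0)^{1-p}$ because $Q_0$ itself contributes to $G$ on its whole base; the entire difficulty is that the bases of dyadic sectors are nested, so these lower bounds cannot be added, and the ``Calder\'on--Zygmund-type decomposition into almost-disjoint $Q_0$'s'' is precisely the content that must be constructed and is not supplied. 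The paper resolves this not on the boundary but inside the disc: it builds generations $\mathcal{E}_1,\mathcal{E}_2,\dots$ of maximal sectors with $N(Q)\geq C$, uses the hypothesis only to absorb the sectors of $\mathcal{A}_{\varepsilon}(Q)$ into their generation sector, and attaches to each generation sector $Q$ a region $L(Q)\subset\{z:|B(z)|<c\}$ at height $\simeq\ell(Q)/N(Q)$ with $\int_{L(Q)}(1-|z|)^{-1-p}\,dm\simeq N(Q)^p\ell(Q)^{1-p}$; these regions are pairwise disjoint, so their total contribution is dominated by the area integral $I(c)$ of Theorem~\ref{thm:main}(b). Finally, your remark that the sectors with $N(Q_0)<C$ ``contribute a bounded amount'' is wrong as stated: there are infinitely many such sectors, and their contribution to \eqref{eq:protasrep} is comparable to $\sum(1-|z_n|)^{1-p}$ over a finite union of separated subsequences of the zeros, which is finite only by appealing to Corollary~\ref{cor:cohnimp}.
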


Part (a) of Corollary~\ref{thm:protasimp} generalizes  \cite[Theorem~2]{P:1973} and \cite[Theorem~4(a)]{V:1984}, 
while part (b) improves Corollary \ref{cor:cohnimp}. 
The following corollary concerns those Blaschke products for which the collection $\mathcal{A}_{1/2}(Q_0)$
is empty for any $Q_0$ with $N(Q_0)\geq 1$.
This result emerges as an immediate consequence of part~(b) of Corollary~\ref{thm:protasimp}.

%%%%%%%%%%%%%%%%%%%%%%%%%%
%%%% ---- COROLLARY ---- %%%%
%%%%%%%%%%%%%%%%%%%%%%%%%%

\begin{corollary}
Let $1/2<p<1$ and $B$ be a Blaschke product with zeros $\{z_n\}$. 
Assume that for any $Q_0\in\mathcal{E}$ with $N(Q_0)\geq 1$, and any dyadic subsector $Q\subset Q_0$,
either $N(Q)=0$ or $N(Q)\geq N(Q_0)$. Then, $B'\in H^p$ if and only if \eqref{eq:protasrep} holds.
\end{corollary}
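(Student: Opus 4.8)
The plan is to derive this corollary as a direct specialization of part (b) of Corollary~\ref{thm:protasimp}. The key observation is that the hypothesis "for any $Q_0\in\mathcal{E}$ with $N(Q_0)\geq 1$ and any dyadic subsector $Q\subset Q_0$, either $N(Q)=0$ or $N(Q)\geq N(Q_0)$" forces the family $\mathcal{A}_{1/2}(Q_0)$ to be empty whenever $N(Q_0)\geq 1$. Indeed, if $Q\in\mathcal{A}_{1/2}(Q_0)$ then $Q$ is a proper dyadic subsector of $Q_0$ with $\tfrac12\,N(Q_0)/\ell(Q_0)\leq N(Q)/\ell(Q)$, so in particular $N(Q)/\ell(Q)\leq \tfrac32\,N(Q_0)/\ell(Q_0)$; since $\ell(Q)<\ell(Q_0)$ we get $N(Q)\leq \tfrac32\,N(Q_0)\,\ell(Q)/\ell(Q_0) < \tfrac32 N(Q_0)$, hence $N(Q)$ is a nonnegative integer strictly less than $\tfrac32 N(Q_0)$, while the hypothesis says $N(Q)$ is either $0$ or at least $N(Q_0)$. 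The lower bound $N(Q)\geq \tfrac12 N(Q_0)(\ell(Q)/\ell(Q_0))>0$ rules out $N(Q)=0$ once we note that $N(Q)$ being a positive multiple forces $N(Q)\geq 1$; combining, $N(Q)\in[N(Q_0),\tfrac32 N(Q_0))$, but then $N(Q)\ell(Q_0)\geq N(Q_0)\ell(Q_0)>N(Q)\ell(Q)$ is incompatible with the upper bound in the definition of $\mathcal{A}_{1/2}(Q_0)$ unless $\ell(Q)\geq \ell(Q_0)$, contradicting $Q\subsetneq Q_0$. Hence $\mathcal{A}_{1/2}(Q_0)=\emptyset$.

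With this in hand, the summability condition in part (b) of Corollary~\ref{thm:protasimp} holds trivially: taking $\varepsilon=1/2$ and, say, $C=1$, the inequality $\sum_{Q\in\mathcal{A}_{1/2}(Q_0)}\ell(Q)\leq \ell(Q_0)$ is automatic for every $Q_0$ with $N(Q_0)\geq 1$ because the left-hand sum is empty. Therefore part~(b) applies and yields: if $B'\in H^p$, then \eqref{eq:protasrep} holds. Conversely, part~(a) of Corollary~\ref{thm:protasimp} gives that \eqref{eq:protasrep} implies $B'\in H^p$ with no extra hypothesis needed. Combining the two directions gives the stated equivalence.

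I do not anticipate a genuine obstacle here, since the statement is explicitly advertised in the text as "an immediate consequence of part~(b) of Corollary~\ref{thm:protasimp}." The only point requiring a little care is the elementary verification that the separation-type hypothesis on the zero counts indeed empties $\mathcal{A}_{1/2}(Q_0)$; one must keep track of the fact that $\ell(Q)<\ell(Q_0)$ strictly (as $Q\neq Q_0$ is a dyadic subsector) and that $N(Q)$ takes integer values, so that the two-sided bound in the definition of $\mathcal{A}_{1/2}$ cannot be met. Once that is checked, the corollary follows formally.
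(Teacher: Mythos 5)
Your proposal is correct and follows exactly the route the paper intends: the hypothesis forces $\mathcal{A}_{1/2}(Q_0)=\emptyset$ whenever $N(Q_0)\geq 1$, so part (b) of Corollary~\ref{thm:protasimp} applies vacuously (with $\varepsilon=1/2$, $C=1$) for necessity, and part (a) gives sufficiency. Your verification that $\mathcal{A}_{1/2}(Q_0)$ is empty is more roundabout than needed --- it suffices to note that a proper dyadic subsector has $\ell(Q)\leq\ell(Q_0)/2$, so $N(Q)\geq N(Q_0)$ already violates the upper bound $N(Q)/\ell(Q)\leq\tfrac32 N(Q_0)/\ell(Q_0)$, while $N(Q)=0$ violates the lower bound --- but your argument reaches the same conclusion correctly.
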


We briefly sketch the main ideas in the proof of Theorem \ref{thm:main}. The necessity of~\eqref{eq:intcond} 
follows easily from the estimate \eqref{eq:sobolev} but the sufficiency is harder. Given a Blaschke product 
$B$ satisfying \eqref{eq:intcond},
we consider the family $\mathcal{F}$ of dyadic sectors $Q$ such that 
$T(Q) \cap \{z\in \D : |B(z)| \leq c \} \neq \emptyset$ and the Blaschke product $B^{\star}$ 
with zeros  $\{z_Q : Q \in \mathcal{F} \}$.
Here $z_Q$ denotes the center of $T(Q)$. It turns out that there exists 
a constant $M=M(c)$ such that $|B'(e^{i \theta})| \leq M \, |{B^\star}' (e^{i \theta})|$ for 
a.e.~$e^{i \theta} \in \partial \D$. The condition \eqref{eq:intcond} easily implies  ${B^\star}' \in H^p$ 
and hence $B' \in H^p$. Let us now discuss the statement in part (c). 
Let $P_{z_n}$ denote the Poisson kernel at the point $z_n \in \D$,
and let $T_{z_n}$ be the box kernel given by 
$T_{z_n} (e^{i \theta})=(1-|z_n|)^{-1}$, if $z_n \in \Gamma_\alpha (e^{i \theta})$,
and $T_{z_n}(e^{i\theta})=0$ otherwise.
It was proved by Frostman \cite{F:1942} (see also \cite{AC:1974})
that, if $B$ is a Blaschke product with zeros $\{z_n \}$, then  
$B' \in H^p$ if and only if $\sum_n P_{z_n} \in L^p (\partial \D)$. The necessity of part (c) in Theorem~\ref{thm:main} follows 
from the trivial estimate $T_{z_n} (e^{i \theta}) \leq C(\alpha) \, P_{z_n} (e^{i \theta})$, which holds  
for all $e^{i \theta} \in \partial \D$. The sufficiency 
is harder and can be deduced from a result of Luecking \cite[Proposition 1]{L:1991}. See also~\cite[Lemma~C]{P:2016}. 
However the proof  of Luecking's result uses the atomic decomposition of tent spaces 
and we present a~more elementary argument. We first 
split the sequence of zeros into convenient families defined according to the size of the 
distribution function of $F_{\alpha, B}$. 
In each family we consider the corresponding Blaschke product $b$ and prove an estimate 
for the size of the set of points 
$z \in \D$ such that  $|b(z)| < 1/2$ by using a~dyadic construction and certain stopping time arguments. It turns out that, 
as Luecking's result, our  argument also extends to the higher dimensional setting. 

The paper is organized as follows. In Section 2 we collect some elementary properties of inner functions. 
Section 3 contains the proof of Theorem~\ref{thm:main}, while Section~\ref{sec:prfcor} 
is devoted to the proofs of the corollaries. In the concluding Section~\ref{sec:questions},
we pose two closely related questions.

%%%%%%%%%%%%%%%%%%%%%%%%
%%%% ---- SECTION ---- %%%%
%%%%%%%%%%%%%%%%%%%%%%%

\section{Auxiliary results}

We begin by recalling basic properties of Blaschke products.
If $\{z_n\} \subset \D$, then 
\begin{equation} \label{eq:lemmaa}
\begin{split}
  \int_0^{2\pi} \# \!\left( \{z_n\} \cap \Gamma_\alpha(e^{i\theta}) \right)  d\theta
  \, \simeq  \, \sum_n (1-|z_n|)
\end{split}
\end{equation}
with comparison constants depending on $\alpha$.
The notation  $C_1 \lesssim C_2$ means that $C_1/C_2$ is bounded by a positive constant, while 
$C_1 \simeq C_2$ indicates that $C_1/C_2$ is bounded from above and below by two positive constants.
If $\{z_n\}$ is a~Blaschke sequence, that is $\sum_n (1-|z_n|)<\infty$, then
$\{z_n\} \cap \Gamma_\alpha(e^{i\theta})$ contains only finitely many points for a.e.~$e^{i\theta}\in\partial\D$.

If $\{z_n\}$ is a Blaschke sequence, then the Blaschke product
\begin{equation*}
  B(z) =  \prod_n \frac{|z_n|}{z_n} \, \frac{z_n-z}{1-\overline{z}_n z}, 
  \quad z\in\D,
\end{equation*}
is a bounded analytic function in $\D$ having zeros precisely at the points $z_n$. Here 
we take the convention that each zero is repeated according to its multiplicity and that  $|z_n|/z_n=1$ for $z_n=0$. Now
\begin{equation*} 
  \frac{B'(z)}{B(z)} 
  = - \sum_n \frac{1-|z_n|^2}{(z_n-z)(1-\overline{z}_n z)},
  \quad z\in\D \setminus \{ z_n\}.
\end{equation*}
Since $|B(z)| \leq |z_n - z|/|1-\overline{z}_n z|$ for $z\in\D$, and for any $n$, we deduce 
\begin{equation} \label{eq:bderind}
  |B'(z)| 
  \leq  \sum_n \frac{1-|z_n|^2}{|1-\overline{z}_n z|^2},
  \quad z\in\D.
\end{equation}

A holomorphic mapping $f$ from the unit disc into itself is said to have a finite angular derivative at 
$e^{i\theta}\in\partial\D$ if $f(e^{i\theta}) = \lim_{r\to 1^-} f(re^{i\theta})$
exists, $|f(e^{i\theta})|=1$, and $f'(e^{i\theta}) = \lim_{r\to 1^-} f'(re^{i\theta})$ exists.
By the Julia-Carath\'eodory theorem \cite[p.~57]{S:1993}, $f$ has a finite  angular derivative at the point $e^{i \theta} \in \partial \D$ 
if and only if
\begin{equation*}
  \liminf_{z\to e^{i\theta}} \frac{1-|f(z)|}{1-|z|} < \infty, 
\end{equation*}
and in this case, 
\begin{equation*}
  |f'(e^{i \theta})| = \liminf_{z\to e^{i\theta}} \frac{1-|f(z)|}{1-|z|}. 
\end{equation*}

A classical result of Frostman says that the Blaschke product $B$ with zeros $\{z_n\}$ 
has a finite angular derivative at 
$e^{i\theta}\in\partial\D$ if and only~if
\begin{equation*} 
  \sum_n \frac{1-|z_n|^2}{|z_n - e^{i\theta}|^2} < \infty.
\end{equation*}
Moreover, if $B$ has a finite angular derivative at $e^{i\theta}\in\partial\D$, then
\begin{equation} \label{eq:Bprimerep}
  \big| B'(e^{i\theta}) \big| 
  = \sum_n P_{z_n}(e^{i\theta})
  = \sum_n \frac{1-|z_n|^2}{|z_n - e^{i\theta}|^2}.
\end{equation}
See \cite[Th\'eor\`eme VI]{F:1942} and \cite[Theorem~2]{AC:1974}. 
Here $P_{z_n}$ is the Poisson kernel at the point $z_n\in\D$. 
As usual, we write $|B'(e^{i\theta})|=\infty$ whenever $B$ has no angular derivative at $e^{i\theta}\in\partial\D$. With
this convention \eqref{eq:Bprimerep} holds for all $e^{i\theta}\in\partial\D$. 

We use the following auxiliary result which is closely related to an idea of Cohn \cite[Lemma~1]{C:1983}.

%%%%%%%%%%%%%%%%%%%%%%
%%%% ---- LEMMA ---- %%%%
%%%%%%%%%%%%%%%%%%%%%%

\begin{lemma} \label{lemma:ccc}
Let $B$ and $B^\star$ be inner functions. Assume that
there exist constants $0<\sigma<1$ and $0<C<\infty$ such that
\begin{equation*}
  \log\frac{1}{|B(z)|} \leq C \, \log\frac{1}{|{\displaystyle B^\star}(z)|}
\end{equation*}
whenever $z\in\D$ and $|B^\star(z)|\geq \sigma$.
If ${\displaystyle B^\star}'\in H^p$ for some $0<p<\infty$, then $B'\in H^p$.
Moreover, $\nm{B'}_{L^p(\partial\D)} \leq C \sigma^{-1} \nm{{B^\star}'}_{L^p(\partial\D)}$.
\end{lemma}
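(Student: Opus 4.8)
The plan is to compare the boundary behaviour of $B'$ with that of ${B^\star}'$ pointwise on $\partial\D$, using the angular-derivative characterisation recorded in \eqref{eq:Bprimerep} together with the Julia--Carath\'eodory theorem. Fix $e^{i\theta}\in\partial\D$ at which ${B^\star}$ has a finite angular derivative; we want to show that $B$ also has one there and that $|B'(e^{i\theta})|\leq C\sigma^{-1}|{B^\star}'(e^{i\theta})|$. By Julia--Carath\'eodory, having a finite angular derivative at $e^{i\theta}$ is equivalent to $\liminf_{z\to e^{i\theta}}(1-|{B^\star}(z)|)/(1-|z|)<\infty$, and that $\liminf$ equals $|{B^\star}'(e^{i\theta})|$; in particular there is a sequence $z_k\to e^{i\theta}$ (which we may take nontangential) along which $(1-|{B^\star}(z_k)|)/(1-|z_k|)\to|{B^\star}'(e^{i\theta})|$. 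Along such a sequence $|{B^\star}(z_k)|\to1$, so eventually $|{B^\star}(z_k)|\geq\sigma$ and the hypothesis of the lemma applies: $\log(1/|B(z_k)|)\leq C\log(1/|{B^\star}(z_k)|)$.

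The key elementary estimate is the inequality $1-t\leq\log(1/t)\leq (1-t)/\sigma$ valid for $\sigma\leq t\leq1$ (the right-hand side is just convexity of $-\log$ on $[\sigma,1]$, or equivalently $\log(1/t)=\int_t^1 ds/s\leq \sigma^{-1}(1-t)$). Applying the left inequality to $t=|B(z_k)|$ and the right inequality to $t=|{B^\star}(z_k)|$ (legitimate once $k$ is large, since then $|{B^\star}(z_k)|\geq\sigma$, and we may also assume $|B(z_k)|\geq\sigma$ after possibly increasing the threshold, or simply note $1-|B(z_k)|\leq\log(1/|B(z_k)|)$ holds for all $t\in(0,1]$), we obtain
\begin{equation*}
  1-|B(z_k)| \;\leq\; \log\frac{1}{|B(z_k)|} \;\leq\; C\,\log\frac{1}{|{B^\star}(z_k)|} \;\leq\; \frac{C}{\sigma}\,\bigl(1-|{B^\star}(z_k)|\bigr).
\end{equation*}
Dividing by $1-|z_k|$ and letting $k\to\infty$ gives
\begin{equation*}
  \liminf_{z\to e^{i\theta}}\frac{1-|B(z)|}{1-|z|} \;\leq\; \limsup_{k\to\infty}\frac{1-|B(z_k)|}{1-|z_k|} \;\leq\; \frac{C}{\sigma}\,|{B^\star}'(e^{i\theta})| \;<\;\infty,
\end{equation*}
so by Julia--Carath\'eodory $B$ has a finite angular derivative at $e^{i\theta}$ and $|B'(e^{i\theta})|\leq C\sigma^{-1}|{B^\star}'(e^{i\theta})|$.

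Since $B$ and $B^\star$ are inner, $|B'(e^{i\theta})|<\infty$ and $|{B^\star}'(e^{i\theta})|<\infty$ for a.e.\ $e^{i\theta}$ (indeed wherever ${B^\star}$ has an angular derivative, which is a.e.\ by $\eqref{eq:Bprimerep}$ when ${B^\star}'\in H^p$), so the pointwise bound $|B'(e^{i\theta})|\leq C\sigma^{-1}|{B^\star}'(e^{i\theta})|$ holds for a.e.\ $e^{i\theta}\in\partial\D$, with the convention $|B'|=\infty$ where the angular derivative fails. Integrating the $p$-th powers over $\partial\D$ yields $\nm{B'}_{L^p(\partial\D)}\leq C\sigma^{-1}\nm{{B^\star}'}_{L^p(\partial\D)}$. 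Finally, a Blaschke product whose boundary derivative is in $L^p(\partial\D)$ lies in $H^p$: this follows because $|B'(e^{i\theta})|=\sum_n P_{z_n}(e^{i\theta})$ is (a.e.\ equal to) the boundary value of the positive harmonic-type sum $z\mapsto\sum_n(1-|z_n|^2)/|1-\overline{z}_nz|^2$ dominating $|B'(z)|$ via \eqref{eq:bderind}, so finiteness of the $L^p$ norm of the boundary function controls $\sup_r\int|B'(re^{i\theta})|^p\,d\theta$ — this is exactly the Frostman--Ahern--Clark equivalence "$B'\in H^p\iff\sum_n P_{z_n}\in L^p(\partial\D)$" quoted in the introduction. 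I expect the only genuine subtlety to be this last implication, i.e.\ passing from an $L^p$ bound on the radial-limit function $|B'(e^{i\theta})|$ back to membership in $H^p$; once one invokes \eqref{eq:bderind} and \eqref{eq:Bprimerep} together with Fatou's lemma this is routine, but it is the step that uses more than bare Julia--Carath\'eodory.
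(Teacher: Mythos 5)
Your proof is correct and follows essentially the same route as the paper: the chain $1-|B(z)|\leq\log(1/|B(z)|)\leq C\log(1/|B^\star(z)|)\leq C\sigma^{-1}(1-|B^\star(z)|)$ on the set where $|B^\star(z)|\geq\sigma$, followed by two applications of the Julia--Carath\'eodory theorem to transfer the angular derivative bound pointwise a.e.\ and then integrate. Your extra care in extracting a sequence realizing the $\liminf$ (so that the hypothesis $|B^\star(z_k)|\geq\sigma$ eventually applies) and your explicit appeal to the Frostman--Ahern--Clark equivalence to pass from the $L^p$ bound on the boundary function back to $H^p$ membership only make explicit steps the paper leaves implicit.
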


%%%%%%%%%%%%%%%%%%%%%%
%%%% ---- PROOF ---- %%%%
%%%%%%%%%%%%%%%%%%%%%%

\begin{proof}
As long as $|B^\star(z)|\geq \sigma$, we have
\begin{equation*} 
  1-|B(z)| \leq \log\frac{1}{|B(z)|} \leq C \, \log\frac{1}{|{\displaystyle B^\star}(z)|} \leq \frac{C}{\sigma}\big(  1-|B^\star(z)| \big).
\end{equation*}
Since ${B^\star}' \in H^p$ for some
$0<p<\infty$, the radial limit $|{B^\star}'(e^{i\theta})|$ exists and is finite for a.e.~$e^{i\theta}\in\partial\D$.
Let $e^{i\theta}\in\partial \D$ be any point where $B^\star$ has a finite angular derivative. By the Julia-Carath\'eodory theorem,
\begin{equation*}
  \liminf_{z\to e^{i\theta}} \, \frac{1-|B^\star(z)|}{1-|z|} = |{B^\star}'(e^{i\theta})|.
\end{equation*}
Hence 
\begin{equation*}
  \liminf_{z\to e^{i\theta}} \frac{1-|B(z)|}{1-|z|} \leq \frac{C}{\sigma} \, |{B^\star}'(e^{i\theta})|.
\end{equation*}
Again, by the Julia-Carath\'eodory theorem, $B'(e^{i\theta})$ exists, it is finite and satisfies 
$|B'(e^{i\theta})| \leq C \sigma^{-1} |{B^\star}'(e^{i\theta})|$.
Since this argument holds for a.e.~$e^{i\theta}\in\D$, we deduce
$\nm{B'}_{L^p(\partial\D)} \leq C \sigma^{-1}  \nm{{B^\star}'}_{L^p(\partial\D)}$.
\end{proof}

%%%%%%%%%%%%%%%%%%%%%%%
%%%% ---- SECTION ---- %%%%
%%%%%%%%%%%%%%%%%%%%%%%

\section{Proof of Theorem~\ref{thm:main}}

We first prove the equivalence between the statements (a) and (b). 
Assume that $B$ is a Blaschke product for which $B'\in H^p$ for $1/2<p<1$. If $0<c<1$, then
\begin{align*}
  \int_{\{ z\in\D \, : \, |B(z)|<c\}} \frac{dm(z)}{(1-|z|)^{1+p}} 
  & \leq \frac{1}{1-c} \, \int_{\D} \frac{1-|B(z)|}{(1-|z|)^{1+p}} \,  dm(z). 
\end{align*}
Since 
\begin{equation*}
  1-|B(r e^{i \theta})| \leq \int_r^1 |B'(s e^{i \theta})| \, ds, \quad 0<r<1, \quad \text{a.e. } e^{i \theta} \in \partial \D,
\end{equation*}
Fubini's theorem gives
\begin{equation*}
  \int_{\{ z\in\D \, : \, |B(z)|<c\}} \frac{dm(z)}{(1-|z|)^{1+p}} 
  \leq \frac{p^{-1}}{1-c} \int_{\D} |B'(z)| (1-|z|)^{-p} \, dm(z) .
\end{equation*}
Estimate \eqref{eq:sobolev} with $q=1$ and $s=p$ gives that the last integral is finite, and hence $I(c)< \infty$. 

Conversely, given $0<c<1$, we will prove that there exists a constant $M=M(c)>0$ such that, 
if $B$ is a Blaschke product with $I(c)< \infty$ then
\begin{equation} \label{eq:ine}
  \|B'\|_{H^p}^p \leq M \int_{\{ z\in\D \, : \, |B(z)|<c\}} \frac{dm(z)}{(1-|z|)^{1+p}} .
\end{equation}
It suffices to prove \eqref{eq:ine} for finite Blaschke products $B$, provided that 
$M$ is independent of $B$. For $n \geq 0$, consider the collection $\mathfrak{D}_n$ of  the $2^n$ dyadic 
subarcs of $\partial\D$, of equal normalized length $2^{-n}$, having pairwise disjoint interiors, and obtain
\begin{equation} \label{eq:dun}
  \D = \bigcup_{n=0}^\infty \bigcup_{I \in \mathfrak{D}_n} T\big(Q(I) \big).
\end{equation}
Partition each top part $T(Q(I))$ in \eqref{eq:dun} into $4^k$ {\it squares} of
with pairwise disjoint interiors
in such a way that the pseudo-hyperbolic diameter of each {\it square} is at most $c/2$.
This is accomplished by choosing $k=k(c)\in\N\cup \{0\}$ to be sufficiently large.
After reindexing the sets, we deduce $\D = \bigcup_{n=1}^{\infty} T_n$
where each set $T_n$ has pseudo-hyperbolic diameter smaller than $c/2$.

Consider the sequence ${\{z_n^\star\}} \subset \D$
formed by the centers of those sets $T_n$, $n\in\N$,
for which $T_n \subset \{z\in\D: |B(z)|<c\}$. By choosing sufficiently small $\mu=\mu(c)$ with $0<\mu<1$,
the Euclidean discs $D_n$ of radius $\mu (1-|z_n^\star|)$,
centered at $z_n^\star$, are pairwise disjoint and satisfy $D_n \subset T_n$ for all $n$.  Now
\begin{equation*}
  \sum_n (1-|z_n^\star|)^{1-p}
  \lesssim \, \sum_n \, \int_{D_n}  \frac{ dm(z)}{(1-|z|)^{1+p}}
  \leq \int_{\{z\in\D \, : \, |B(z)|<c\}}  \frac{ dm(z)}{(1-|z|)^{1+p}}
\end{equation*}
with a comparison constant depending on $c$ and $p$. 
Let $B^\star$ be the Blaschke product with zeros ${\{z_n^\star\}}$.
By \eqref{eq:protas}, we deduce ${B^\star}' \in H^p$ and 
\begin{equation*}
  \|{B^\star}'\|_{H^p}^p \lesssim \int_{\{ z\in\D \, : \, |B(z)|<c\}} \frac{dm(z)}{(1-|z|)^{1+p}} .
\end{equation*}

We proceed to show that $B'\in H^p$. Consider the set
$\Omega = \bigcup T_n$, where the union runs over the indices $n\in\N$ for which 
$T_n \subset \{z\in\D :  |B(z)|<c\}$. If $z\not\in\Omega$ then 
$z$ belongs to some set $T_n$ which contains 
another point $w_z$ such that $|B(w_z)|\geq c$. By the Schwarz-Pick lemma,
\begin{equation*} 
  |B(z)| \geq \frac{|B(w_z)|- \varrho(z,w_z)}{1-\varrho(z,w_z) |B(w_z)|} 
  \geq c_1= \frac{c-c/2}{1-c^2/2},
  \quad z\in\D \setminus \Omega.
\end{equation*}
Note that $|B^\star(z)| < \sigma$ for all $z\in \overline{\Omega} \cap \D$, where $\sigma=\sigma(c)$
with $0<\sigma<1$ is a~sufficiently large constant, 
since $B^\star$ has a zero within a fixed pseudo-hyperbolic distance
from any point on $\overline{\Omega}\cap\D$. Hence, if $|B^\star (z)| \geq \sigma$ we have $|B(z)| \geq c_1$, 
and Lemma~\ref{lemma:ccc} yields $\|B'\|_{H^p}^p \lesssim \|{B^\star}'\|_{H^p}^p$.

Let us now prove the equivalence between the statements (a) and (c). Since $\alpha$ is fixed,
we write $F=F_{\alpha, B}$. Assume that $B$ is a Blaschke product with zeros $\{z_n\}$, and
$B'\in H^p$ for $1/2<p<1$. Since
\begin{equation*}
  P_{z_n}(e^{i\theta}) = \frac{1-|z_n|^2}{|z_n-e^{i\theta}|^2} > \frac{1}{\alpha^2} \, \frac{1}{1-|z_n|},
  \quad z_n\in \Gamma_\alpha(e^{i\theta}), \quad e^{i\theta}\in\partial\D,
\end{equation*}
we obtain
\begin{equation*}
  \sum_{z_n \in \Gamma_\alpha(e^{i\theta})} P_{z_n}(e^{i\theta}) 
  \geq \frac{1}{\alpha^2}\sum_{z_n \in \Gamma_\alpha(e^{i\theta})}  \frac{1}{1-|z_n|}
  = \frac{1}{\alpha^2} \, F(e^{i\theta}),
  \quad e^{i\theta}\in \partial\D.
\end{equation*}
By assumption $B$ has a finite angular derivative for a.e.~$e^{i\theta}\in\partial\D$, 
and hence by \eqref{eq:Bprimerep}, $\nm{F}_{L^p(\partial\D)}^p \leq \alpha^{2p} \, \nm{B'}_{H^p}^p$.

The converse assertion is more difficult. We begin by reducing the problem to a~certain estimate,
which is then proved in steps. Assume that $B$ is a~Blaschke product and $F\in L^p(\partial\D)$. 
For any arc $I\subset\partial\D$,
we consider a dilated sector 
\begin{equation*}
  Q_\alpha(I) = \big\{ re^{i\theta} : e^{i\theta} \in I, \, \max\{ 1-c(\alpha) |I|, 0 \}\leq r < 1 \big\},
\end{equation*}
where $c(\alpha)$ is a positive constant which will be defined later.
In this case we write $\ell(Q_\alpha) = 1 - \max\{ 1-c(\alpha) |I|, 0 \}$, where $|I|$ 
denotes the normalized arc length of $I$ as above. Given an arc $I\subset \partial\D$ and $n \geq 0$, 
consider the dyadic decomposition of $I$ into the collection  $ \mathfrak{D}_n $ of $2^n$ subarcs of $I$, 
of equal length, having pairwise disjoint interiors, and obtain
\begin{equation*}
  Q_\alpha(I) = \bigcup_{n=0}^\infty \bigcup_{J \in \mathfrak{D}_n} T\big(Q_\alpha(J) \big).
\end{equation*}

Consider the open sets 
\begin{equation*}
  \big\{ e^{i\theta} \in \partial\D : F(e^{i\theta}) > 2^N \big\} \, = \, \bigcup_j \, I_j^{(N)}, \quad N\in\N,
\end{equation*}
where $\{I_j^{(N)} \}$ are pairwise disjoint open subarcs
of $\partial\D$ for any fixed $N \in \N$. These collections are nested in the sense that
\begin{equation*}
  \bigcup_j \, I_j^{(N+1)} \subset \, \bigcup_j \, I_j^{(N)}, \quad N\in\N.
\end{equation*}
Write $Q_j^{(N)} = Q_\alpha\big( I_j^{(N)} \big)$ for short. Decompose the zero-sequence $\{z_n\}$ of
$B$ in the following way:  
\begin{equation*}
  \{z_n\} 
  = \bigcup_{N=1}^\infty \bigcup_{j} \Lambda_{N,j}, \quad \Lambda_{N,j} 
  = \{z_n\} \cap \left( \, Q_j^{(N)} \, \setminus \, \bigcup_k \, Q_k^{(N+1)} \right).
\end{equation*}
Following this notation 
\begin{equation*}
  \sum_n P_{z_n}(e^{i\theta}) = \sum_{N,j} \sum_{z_n \in \Lambda_{N,j}} \!P_{z_n}(e^{i\theta}),
  \quad e^{i\theta}\in\partial\D,
\end{equation*}
and since $1/2<p<1$,
\begin{equation} \label{eq:triplesumP}
  \bigg\lVert \sum_n P_{z_n} \bigg\rVert_{L^p(\partial\D)}^p
  \leq  \sum_{N,j} \, \bigg\lVert  \sum_{z_n \in \Lambda_{N,j}} 
  \!P_{z_n} \, \bigg\rVert_{L^p(\partial\D)}^p.
\end{equation}
The proof reduces to showing that there exists a constant $C=C(\alpha,p)>0$ such that
\begin{equation} \label{eq:(2)}
  \bigg\lVert  \sum_{z_n \in \Lambda_{N,j}} 
  \!P_{z_n} \, \bigg\rVert_{L^p(\partial\D)}^p \leq C \, 2^{Np} \, |I_j^{(N)} |
\end{equation}
for all $j$ and $N$.

Assume that \eqref{eq:(2)} holds, and let us complete the proof. By \eqref{eq:triplesumP} and \eqref{eq:(2)}, 
\begin{align*}
  \bigg\lVert \sum_n P_{z_n} \bigg\rVert_{L^p(\partial\D)}^p
  &  \leq C \, \sum_{N=1}^\infty 2^{Np} \bigg( \sum_j |I_j^{(N)}| \bigg)\\
  &  = C \, \sum_{N=1}^\infty 2^{Np} \, \big|\big\{ e^{i\theta} \in \partial\D : F(e^{i\theta}) > 2^N \big\}\big|\\
  &  \leq \frac{2C}{p} \int_0^{2\pi}  | F(e^{i\theta}) |^p \, d\theta . 
\end{align*}
We conclude that $B'\in H^p$ by \eqref{eq:Bprimerep} and \cite[Theorem~4]{AC:1974}, and moreover,
\begin{equation*}
  \nm{B'}_{H^p}^p \leq \frac{4 \pi C}{p} \, \nm{F}_{L^p(\partial\D)}^p.
\end{equation*}

%%%%%%%%%%%%%%%%%%%%%%%%%%
%%%% ---- SUBSECTION ---- %%%%
%%%%%%%%%%%%%%%%%%%%%%%%%%

\subsection*{Proof of \eqref{eq:(2)}} 
It is clear that the points in $\Lambda_{N,j}$, 
for fixed $j$ and $N$, are uniformly bounded away from $\partial\D$.
The remaining part of the proof of Theorem~\ref{thm:main} consists of three steps. 
In the first step we obtain a uniform estimate for the  function $F$ corresponding to 
the points in $\Lambda_{N,j}$, while in the second step we 
consider the Blaschke product $b$ with zeros $\Lambda_{N,j}$.
Finally, in the third step, we construct another finite Blaschke product $b^\star$
 in such a way that $b^\star$ has a~zero within a~fixed hyperbolic distance from any point where $b$ is
 (in a certain sense) small, and show that the radial limits of $b'$ are majorized by those of ${b^\star}'$.

%%%%%%%%%%%%%%%%%%%%%%%%%%%%%
%%%% ---- SUBSUBSECTION ---- %%%%
%%%%%%%%%%%%%%%%%%%%%%%%%%%%%

\subsubsection*{Step 1.}
Fix $j$ and $N$. To simplify the notation, we write
\begin{equation*}
  I_0 = I_j^{(N)}, \quad Q_0 = Q_j^{(N)}, \quad 
  \Lambda = \Lambda_{N,j} .
\end{equation*}
Note that $\Lambda$ contains only finitely many points. Consider the function
\begin{equation*} 
  G(e^{i\theta}) = \sum_{z_n \in \Lambda \,\cap \,\Gamma_\beta(e^{i\theta})} \frac{1}{1-|z_n|}, \quad e^{i\theta}\in \partial\D,
\end{equation*}
where the constant $\beta=\beta(\alpha)$ with $1<\beta<\alpha$ is chosen in such a way that the aperture 
of $\Gamma_\alpha$ is twice the aperture of $\Gamma_\beta$. We proceed to prove
\begin{equation} \label{eq:(3)}
  G(e^{i\theta}) \leq 2^{N+1}, \quad e^{i\theta} \in \partial\D.
\end{equation}
If, on contrary to \eqref{eq:(3)}, there exists $e^{i\theta_0}\in\partial\D$ such that $G(e^{i\theta_0})>2^{N+1}$, then there
is a point $z_n^\star \in \Lambda \,\cap \,\Gamma_\beta(e^{i\theta_0})$ such that
\begin{equation*}
  \sum_{\substack{z_n \in \Lambda \,\cap \,\Gamma_\beta(e^{i\theta_0}) \\ 1-|z_n| \, \geq \, 1-|z_n^{\star}|}} \frac{1}{1-|z_n|} > 2^{N+1}.
\end{equation*}
It follows that
\begin{equation*}
  F(e^{i\theta}) > 2^{N+1}, \quad \left| \, e^{i\theta} - z_n^\star / |z_n^\star| \, \right| < t(\alpha) (1-|z_n^\star|),
\end{equation*}
where $t(\alpha) > 0$.
Hence, the arc of Euclidean length $2\, t(\alpha) \, (1-|z_n^\star|)$, centered at $z_n^\star / |z_n^\star|\in\partial\D$, 
is contained in $\bigcup_k I_k^{(N+1)}$. Define $c(\alpha) = \pi\, t(\alpha)^{-1}$.
It follows that $z_n^\star \in \bigcup_k Q_k^{(N+1)}$, which is clearly a~contradiction. This proves \eqref{eq:(3)}.

%%%%%%%%%%%%%%%%%%%%%%%%%%%%%
%%%% ---- SUBSUBSECTION ---- %%%%
%%%%%%%%%%%%%%%%%%%%%%%%%%%%%

\subsubsection*{Step 2.}

Let $b=b(N,j)$ be the (finite) Blaschke product with zeros $\Lambda = \Lambda_{N,j}$. 
Since top parts of (dilated) dyadic sectors have bounded hyperbolic diameter, by the Schwarz-Pick lemma, 
there exists a constant $\kappa=\kappa(\alpha)$ with $0<\kappa<1$
satisfying the following property: if $|b(z)|<1/2$ for  some point $z\in Q_0$,
then $|b(z_Q)|<\kappa$ where $Q$ is the dyadic subsector of $Q_0$ with $z\in T(Q)$.
Let $\mathcal{A}$ be the family of 
those proper (dilated) dyadic subsectors $Q$ of $Q_0$ for which $|b(z_Q)|<\kappa$. This step 
consists of proving the estimate
\begin{equation} \label{eq:(4)}
  \sum_{Q\in \mathcal{A}} \ell(Q)^{1-p} \leq c_1 \, 2^{Np} \, |I_0|,
\end{equation}
where $c_1= c_1(\alpha,p)$ is a positive constant. 

The estimate \eqref{eq:(3)} yields $1-|z_n|\geq 2^{-N-1}$ for any $z_n\in \Lambda$. Consequently,
if $z\in\D$ and $1-|z|\leq 2^{-N-2}$, then
\begin{equation*} 
  \inf_{z_n \in \Lambda} \, \varrho(z,z_n)
  \geq \inf_{z_n \in \Lambda} \, \frac{|z|-|z_n|}{1-|z_n| \, |z|} 
  > \frac{1}{3}.
\end{equation*}
 Using that $\log x^{-1} \simeq 1-x^2$ if $1/3 < x <1$, we deduce
\begin{equation} \label{eq:long}
  \log\frac{1}{|b(z)|} 
  \simeq (1-|z|) \, \sum_{z_n\in\Lambda} \frac{1-|\overline{z}_n z|^2}{|1-\overline{z}_n z|^2},
  \quad 1-|z|\leq 2^{-N-2},
\end{equation}
with absolute comparison constants. Define
\begin{equation*} 
  v(z) = \sum_{z_n\in\Lambda} \frac{1-|\overline{z}_n z|^2}{|1-\overline{z}_n z|^2}, \quad z\in\D.
\end{equation*}
It is clear that $v$ is a positive harmonic function in $\D$. By \eqref{eq:long},
\begin{equation} \label{eq:(5)}
  \log\frac{1}{|b(z)|}  \simeq (1-|z|) \, v(z), \quad 1-|z|\leq 2^{-N-2},
\end{equation}
with absolute comparison constants.

Let $\mathcal{D}(N)$ be the family of those (non-dilated) dyadic sectors $Q(I)$ of $\D$ for which
$|I| =  2^{-N-2}$ and $I \subset 2I_0$. If we denote $R_N = 1 - 2^{-N-2}$, then the Poisson integral formula yields
\begin{equation*}
  v(z_{Q_0}) 
  \gtrsim \frac{1}{|I_0|} \sum_{Q(I)\in \mathcal{D}(N)}   \int_{  e^{i t } \in I} v(R_Ne^{it}) \, dt
\end{equation*}
with an absolute comparison constant. Now, Harnack's inequality gives
\begin{equation} \label{eq:(6)}
  v(z_{Q_0})  
  \gtrsim \frac{1}{|I_0|} \sum_{Q\in \mathcal{D}(N)} v(z_Q) \, 2^{-N}
\end{equation}
with an absolute comparison constant.
Since $\Lambda\subset Q_0$, we have
\begin{equation*}
  1-|\overline{z}_n z_{Q_0}| \simeq |I_0|, \quad \big| 1-\overline{z}_n z_{Q_0} \big| 
  \simeq |I_0|, \quad z_n\in \Lambda,
\end{equation*}
and consequently,
\begin{equation*}
  v(z_{Q_0})  = \sum_{z_n\in\Lambda} \frac{1-|\overline{z}_n z_{Q_0}|^2}{|1-\overline{z}_n z_{Q_0}|^2}
  \simeq \frac{1}{|I_0|} \, \# \Lambda
\end{equation*}
with absolute comparison constants. Let us now estimate $ \# \Lambda$. We have
\begin{align*}
  \int_{e^{i\theta} \in I_0} G(e^{i\theta}) \, d\theta
  & = \int_{e^{i\theta}\in I_0} \left( \, \sum_{z_n\in \Lambda} 
    \frac{\ch_{\Gamma_\beta(e^{i\theta})}(z_n)}{1-|z_n|} \right) d\theta\\
  &  = \sum_{z_n\in \Lambda} \frac{\big| \big\{ e^{i\theta}\in I_0  : z_n \in \Gamma_\beta(e^{i\theta})\big\} \big|}{1-|z_n|}
    \simeq \# \Lambda
\end{align*}
with comparison constants depending on $\alpha$. By using \eqref{eq:(3)}, we conclude that 
$\#\Lambda \lesssim 2^N |I_0|$ which implies 
$v(z_{Q_0}) \lesssim 2^N$, and by \eqref{eq:(6)},
\begin{equation} \label{eq:(7)}
  \frac{1}{|I_0|} \sum_{Q\in \mathcal{D}(N)} v(z_Q) \, 2^{-N} 
  \lesssim 2^N
\end{equation}
with a comparison constant depending on $\alpha$.

For $j\in\N$, let $\mathcal{B}(j)$ be the collection of those sectors $Q\in \mathcal{D}(N)$ for which
$2^{j+N} \leq v(z_Q) < 2^{j+N+1}$. Moreover, let the collection $\mathcal{B}(0)$ consist of
sectors $Q\in \mathcal{D}(N) \setminus \bigcup_{j=1}^\infty \mathcal{B}(j)$. The estimate \eqref{eq:(7)} gives
\begin{equation} \label{eq:(81)}
  \# \mathcal{B}(j) \lesssim 2^{N-j} |I_0|, \quad j\in \N,
\end{equation}
with a comparison constant depending on $\alpha$. Since $1-|z_n| \geq 2^{-N-1}$ for any 
$z_n\in\Lambda$, it can be shown that $v(z) \simeq v(z_Q)$ 
whenever $z\in Q \in \mathcal{D}(N)$, with absolute comparison constants. Hence 
$v(z) \simeq 2^{j+N}$ if $z\in Q \in \mathcal{B}(j)$
for $j\in\N$, and $v(z) \lesssim 2^{N}$ if $z\in Q \in \mathcal{B}(0)$.
By \eqref{eq:(5)}, we deduce
\begin{equation} \label{eq:(8)}
  \log\frac{1}{|b(z)|} \simeq (1-|z|) \, 2^{j+N}, \quad z\in Q \in \mathcal{B}(j), \quad j \geq 1,
\end{equation}
and
\begin{equation} \label{eq:(9)}
  \log\frac{1}{|b(z)|} \lesssim (1-|z|) \, 2^{N}, \quad z\in Q \in \mathcal{B}(0), 
\end{equation}
with absolute comparison constants. For any $Q\in\mathcal{B}(j)$, $j\geq 0$, we denote 
\begin{equation*}
  F(Q)= \sum_{\substack{\widetilde{Q}\in\mathcal{A}\\z_{\widetilde{Q}}\in Q}} \ell(\widetilde{Q})^{1-p} . 
\end{equation*}
For any $Q\in \mathcal{B}(j)$ for $j\geq 1$, by \eqref{eq:(8)}, the sectors $\widetilde{Q}\in \mathcal{A}$
with $z_{\widetilde{Q}}\in Q$ must satisfy $\ell(\widetilde{Q})  \simeq  1-|z_{\widetilde{Q}}| \gtrsim 2^{-j-N}$,
with comparison constants depending on $\alpha$. We conclude
\begin{equation*}
  F(Q) \lesssim  \sum_{t=0}^j \, 2^t \left( 2^{-N-2-t} \right)^{1-p}
  \lesssim 2^{-N(1-p)} \, 2^{jp}, \quad Q\in \mathcal{B}(j), \quad j\geq 1,
\end{equation*}
with a comparison constant depending on $\alpha$ and $p$. By \eqref{eq:(81)}, 
\begin{equation*}
  \sum_{Q\in \mathcal{B}(j)} \, F(Q) 
  \lesssim 2^{-N(1-p)} \, 2^{jp} \cdot \# \mathcal{B}(j) 
  \lesssim 2^{Np} \, 2^{-j(1-p)} \, |I_0|, \quad j\geq 1,
\end{equation*}
and hence
\begin{equation} \label{eq:primertros}
  \sum_{j=1}^\infty \sum_{Q\in \mathcal{B}(j)} \, F(Q) 
  \lesssim 2^{Np} |I_0|
\end{equation}
with a comparison constant depending on $\alpha$ and $p$. For any $Q\in \mathcal{B}(0)$, by \eqref{eq:(9)}, the sectors $\widetilde{Q}\in \mathcal{A}$ with $z_{\widetilde{Q}}\in Q$ must satisfy $\ell(\widetilde{Q})  \gtrsim 2^{-N}$. Hence 
\begin{equation} \label{eq:segontros}
  \sum_{Q\in \mathcal{B}(0)} \, F(Q) 
  \lesssim 2^{Np} |I_0|  
\end{equation}
with a comparison constant depending on $\alpha$ and $p$.
Let $\mathcal{C}$ be the family of dilated dyadic sectors $Q \subset Q_0$ with $1-|z_Q| \geq 2^{-N-2}$. Then
\begin{equation} \label{eq:tercertros}
  \sum_{Q \in \mathcal{C}} \ell(Q)^{1-p}   \lesssim 2^{Np} |I_0|
\end{equation}
with a comparison constant depending on $\alpha$ and $p$. Now,  \eqref{eq:primertros}, \eqref{eq:segontros} and \eqref{eq:tercertros} give \eqref{eq:(4)}.

%%%%%%%%%%%%%%%%%%%%%%%%%%%%%
%%%% ---- SUBSUBSECTION ---- %%%%
%%%%%%%%%%%%%%%%%%%%%%%%%%%%%

\subsubsection*{Step 3.}

Consider the (finite) Blaschke product
\begin{equation*}
  b^\star(z) = \left( \frac{z_{Q_0} - z}{1-\overline{z_{Q_0}} z} \right)^m \, 
  \prod_{Q\in\mathcal{A}} \frac{z_Q-z}{1-\overline{z_Q} z}, \quad z\in\D,
\end{equation*}
where $m= \max \,\{ n\in\N \cup \{0\} : n \leq 2^N |I_0| \}$. Estimate~\eqref{eq:(4)} gives
\begin{equation} \label{eq:zeroscount}
\begin{split}
  m^p (1-|z_{Q_0}|)^{1-p} + \sum_{Q\in\mathcal{A}} (1-|z_Q|)^{1-p}
  & \leq \left( 2^{N} |I_0| \right)^p  |I_0|^{1-p} + \sum_{Q\in\mathcal{A}} \ell(Q)^{1-p}\\
  & \lesssim  2^{Np} |I_0|
\end{split}
\end{equation}
with a comparison constant depending on $\alpha$ and $p$.

Let $\mathcal{Z}$ denote the zero-sequence of $b^\star$, where
zeros are counted according to their multiplicities. Note that $\mathcal{Z} \subset Q_0$.
Let $Q$ be a dyadic subsector of $Q_0$, and assume that there is a point $\zeta\in \mathcal{Z} \cap T(Q)$.
Since there exists an~absolute constant $0<\nu<1$ such that $\varrho(\zeta, z_Q)<\nu$, we deduce
\begin{equation*} 
  \frac{1-|\zeta|^2}{|1-\overline{\zeta} z|^2} \simeq  \frac{1-|z_Q|^2}{|1-\overline{z_Q} z|^2}, \quad z\in\D,
\end{equation*}
with absolute comparison constants.
Let $\mathcal{E}(Q_0)$ be the collection of dyadic sectors $Q\subset Q_0$.
Hence, by \eqref{eq:bderind}, 
\begin{equation*}
  | {b^\star}'(z) | 
  \lesssim 
  \sum_{Q\in \mathcal{E}(Q_0)} \!\!
  \# \big( \mathcal{Z} \cap T(Q) \big) \, \frac{1-|z_Q|^2}{|1-\overline{z_Q} z|^2},
  \quad z\in\D,
\end{equation*}
with an absolute comparison constant.
Since $1/2<p<1$, we deduce
\begin{align*}
  & \frac{1}{2\pi} \int_0^{2\pi} |{b^\star}'(re^{i\theta})|^p \, d\theta\\
  & \qquad \lesssim \sum_{Q\in \mathcal{E}(Q_0)} 
  \#\big( \mathcal{Z}  \cap T(Q) \big)^p \, (1-|z_Q|)^{1-p},  \quad 1/2 \leq |z_{Q_0}|\, r <1,
\end{align*}
with a comparison constant depending on $p$. Finally, according to \eqref{eq:zeroscount}, we have
$\bnm{{b^\star}'}_{H^p}^p \lesssim 2^{Np} |I_0|$ with a comparison constant depending on $\alpha$ and $p$.

We proceed to prove that $\nm{b'}_{L^p(\partial\D)}^p \lesssim \nm{{b^\star}'}_{L^p(\partial\D)}^p$.
By construction, there exist a constant $\sigma=\sigma(\alpha)$ with $0<\sigma<1$
satisfying the following property: if $|b^\star(z)|\geq \sigma$ for some $z\in\D$ then
$|b(z)|\geq 1/2$. Assume on contrary that $|b(z)|<1/2$ for some $z\in Q_0$.
Then, $|b(z_Q)|<\kappa$ where $Q$ is the dyadic subsector of $Q_0$ with $z\in T(Q)$.
Hence, $b^\star(z_Q)=0$ and 
$|b^\star(z)|\leq \varrho(z,z_Q)$, which proves the property. By Lemma~\ref{lemma:ccc},
we deduce $\nm{b'}_{L^p(\partial\D)}^p \lesssim \nm{{b^\star}'}_{L^p(\partial\D)}^p$,
with a comparison constant depending on $\alpha$ and $p$.
This completes the proof of \eqref{eq:(2)}, and hence the proof of Theorem~\ref{thm:main}.

%%%%%%%%%%%%%%%%%%%%%%%
%%%% ---- SECTION ---- %%%%
%%%%%%%%%%%%%%%%%%%%%%%

\section{Proof of the corollaries} \label{sec:prfcor}

%%%%%%%%%%%%%%%%%%%%%%
%%%% ---- PROOF ---- %%%%
%%%%%%%%%%%%%%%%%%%%%%

\begin{proof}[Proof of Corollary~\ref{cor:cohnimp}]
If $B$ is a Blaschke product whose zeros satisfy \eqref{eq:protas}, then $B'\in H^p$ by \cite[Theorem~2]{P:1973}.
Note that no assumption on the separation of zeros is needed here. 
Conversely, suppose that $B$ is a Blaschke product whose zeros $\{ z_n \}$
are separated, and $B' \in H^p$ for some $1/2<p<1$. Fix $1<\alpha<\infty$. By \eqref{eq:Bprimerep},
\begin{equation*}
  \big| B'(e^{i\theta}) \big| 
  \geq \frac{1}{\alpha^2} \, \sum_{z_n\in \:\!\Gamma_\alpha(e^{i\theta})}  \frac{1}{1-|z_n|}, 
  \quad \text{ a.e. } e^{i\theta}\in\partial\D.
\end{equation*}
Define $F^{\star} : \partial\D \to \R \cup \{\infty\}$ by
\begin{equation*}
  F^\star(e^{i\theta}) = \begin{cases}
    \left( \, \inf\limits_{z_n\in \:\! \Gamma_\alpha(e^{i\theta})} \big|e^{i\theta}-z_n \big| \right)^{-1}, 
    & \text{if $\{z_n\} \cap \Gamma_\alpha(e^{i\theta}) \neq \emptyset$,}\\
    0, & \text{otherwise.}
  \end{cases}
\end{equation*}
Note that $F^\star$ is finite almost everywhere, since \eqref{eq:lemmaa} ensures that 
the intersection $\{z_n\} \, \cap \, \Gamma_\alpha(e^{i\theta})$ contains at most finitely many
points for a.e.~$e^{i\theta}\in\partial\D$. It is also clear that $F^* (e^{i\theta}) \leq {\alpha}^2 \, |B'(e^{i\theta})|$ 
for a.e.~$e^{i\theta} \in \partial \D$. The assumption $B'\in H^p$ implies that $F^\star\in L^p(\partial\D)$. 

For $j\geq 1$, consider the annulus $R_j =\{z \in \D : 2^{-j-1} \leq 1-|z| < 2^{-j}\}$. 
Define $J(e^{i\theta}) \in \N \cup \{\infty\}$ for those $e^{i\theta}\in\partial\D$ 
for which $\{z_n\} \cap \Gamma_\alpha(e^{i\theta}) \neq \emptyset$ by the following rules.
If the infimum
\begin{equation} \label{eq:inf}
  \inf_{z_n\in \:\!\Gamma_\alpha(e^{i\theta})} \big|e^{i\theta}-z_n \big|
\end{equation}
is strictly positive, then let $J(e^{i\theta})\in\N$ be the index such that the annulus $R_{J(e^{i\theta})}$ contains the point
at which the infimum \eqref{eq:inf} is attained. If the infimum \eqref{eq:inf} is zero, then define $J(e^{i\theta}) = \infty$.
Note that $J(e^{i\theta})$ can be infinite only on a set of Lebesgue measure zero.

Since $\{z_n \}$ is separated, there is a constant $S=S(\alpha, \{z_n\})$
such that 
\begin{equation*} 
  \# \big( \{z_n \} \cap R_j \cap \Gamma_\alpha(e^{i\theta}) \big) \leq S< \infty,
  \quad e^{i\theta}\in\partial\D, \quad j\in\N.
\end{equation*}
For those $e^{i\theta}\in\partial\D$ for which $\{z_n\} \cap \Gamma_\alpha(e^{i\theta}) \neq \emptyset$
and $J(e^{i\theta})<\infty$, we obtain
\begin{equation*} 
  \sum_{z_n\in \:\!\Gamma_\alpha(e^{i\theta})}  \frac{1}{(1-|z_n|)^p}
  =  \sum_{j=1}^{J(e^{i\theta})} \sum_{z_n\in \:\!\Gamma_\alpha(e^{i\theta}) \cap R_j}  \frac{1}{(1-|z_n|)^p}
  \leq S  \sum_{j=1}^{J(e^{i\theta})} 2^{j p} 
  \lesssim F^\star(e^{i\theta})^p,
\end{equation*}
while otherwise this estimate is trivial. Now
\begin{equation*}
  \int_0^{2\pi} F^\star(e^{i\theta})^p \, d\theta
  \, \gtrsim \, \int_0^{2\pi} \Bigg( \sum_n \frac{\ch_{\Gamma_\alpha(e^{i\theta})}(z_n)}{(1-|z_n|)^p}  \Bigg) d\theta
  \, \simeq \, \sum_n (1-|z_n|)^{1-p}.
\end{equation*}
This completes the proof of Corollary~\ref{cor:cohnimp}.
\end{proof}

%%%%%%%%%%%%%%%%%%%%%%
%%%% ---- PROOF ---- %%%%
%%%%%%%%%%%%%%%%%%%%%%
 
\begin{proof}[Proof of Corollary~\ref{cor:m1}]
Without loss of generality, we may assume  that  $\{z_n\}$ is contained in
$\Gamma_\beta(1)$. Let $\alpha=2\beta$. By elementary geometry,
there exist natural numbers $k_1=k_1(\beta)$ and $k_2=k_2(\beta)$  such that:
\begin{enumerate}
\item[\rm (i)]
if $z_n \in \Gamma_\alpha(e^{i\theta})$ and $2^{-k+1} < | e^{i\theta} - 1| \leq 2^{-k+2}$ for some $k\in\N$,
then $1-|z_n| > 2^{-k-k_1}$;

\item[\rm(ii)]
if $1-|z_n| > 2^{-k+k_2}$ and $2^{-k+1} < | e^{i\theta} - 1| \leq 2^{-k+2}$ for some $k>k_2$,
then $z_n \in \Gamma_\alpha(e^{i\theta})$.
\end{enumerate}
For $k \in \N$, consider $I_k = \{ e^{i\theta} \in\, \partial\D \, :\,   2^{-k+1} < | e^{i\theta} - 1| \leq 2^{-k+2} \}$. 
Let $1/2<p<1$, and consider the function $F=F_{\alpha,B}$ in \eqref{eq:defF}. We deduce
\begin{equation*}
  F(e^{i\theta}) 
  \leq \sum_{j=1}^{k+k_1} \sum_{z_n \in R_j} \frac{1}{1-|z_n|} \leq \sum_{j=1}^{k+k_1} 2^j N_j,
  \quad  e^{i\theta} \in I_k, \quad k\in\N.
\end{equation*}
Since $1/2<p<1$, we have
\begin{align*}
  \int_0^{2\pi} F(e^{i\theta})^p \, d\theta
  & = \sum_{k=1}^\infty \, \int_{I_k} F(e^{i\theta})^p \, d\theta\\
  & \lesssim \sum_{k=1}^\infty 2^{-k} \left( \, \sum_{j=1}^{k+k_1} 2^j N_j \right)^p
    \leq \sum_{k=1}^\infty 2^{-k} \left( \, \sum_{j=1}^{k+k_1} 2^{jp} N_j^p \right) \\
  & = \sum_{j=1}^\infty 2^{jp} N_j^p \left( \, \sum_{k=j-k_1}^{\infty} 2^{-k}  \right)
    \lesssim \sum_{j=1}^\infty 2^{-j(1-p)} N_j^p
\end{align*}
with a comparison constant depending on $\beta$. Similarly,
\begin{equation*}
  F(e^{i\theta}) 
  \geq \sum_{j=1}^{k-k_2} \sum_{z_n \in R_j} \frac{1}{1-|z_n|} 
  \geq \frac{1}{2} \, \sum_{j=1}^{k-k_2} 2^j N_j,
  \quad e^{i\theta} \in I_k, \quad k> k_2,
\end{equation*}
and hence
\begin{align*}
  \int_0^{2\pi} F(e^{i\theta})^p \, d\theta
  & \geq  \sum_{k=k_2+1}^\infty \, \int_{I_k} F(e^{i\theta})^p \, d\theta\\
  & \gtrsim \sum_{k=k_2+1}^\infty 2^{-k} \left( \, \sum_{j=1}^{k-k_2} 2^j N_j \right)^p
    \geq \sum_{k=k_2+1}^\infty 2^{-k} \cdot 2^{(k-k_2)p} N_{k-k_2}^p \\
  & \gtrsim \sum_{k=1}^\infty 2^{-k(1-p)} N_k^p 
\end{align*}
with a comparison constant depending on $\beta$ and $p$. We have proved that
\begin{equation*}
  \nm{F}_{L^p(\partial\D)}^p \, \simeq \,\sum_{k=1}^\infty 2^{-k(1-p)} N_k^p 
\end{equation*}
with comparison constants depending on $\beta$ and $p$. The assertion follows by applying Theorem~\ref{thm:main}.
\end{proof}

%%%%%%%%%%%%%%%%%%%%%%
%%%% ---- PROOF ---- %%%%
%%%%%%%%%%%%%%%%%%%%%%

\begin{proof}[Proof of Corollary~\ref{cor:m2}] Given $a \in \D$, let $\tau_a $ be 
the automorphism of the unit disc given by $\tau_a (w) = (a-w)/(1- \overline{a}w)$, $w \in \D$. 
If $\tau_a \circ B$ is a Blaschke product and  \eqref{eq:preimages} holds, by \eqref{eq:protas}, 
we deduce $(\tau_a \circ B)' \in H^p$ and hence $B' \in H^p$. 
Conversely, assume that $B$ is a Blaschke product with $B' \in H^p$ for $1/2<p<1$, and let us show the last part of the statement. 
By the result of Carleson mentioned in the Introduction, it is sufficient to show that for all $a \in \D$, except possibly 
for a set of logarithmic capacity zero, we have
\begin{equation*}
  F(a) = \int_{\D} \frac{\log |(\tau_a \circ B) (z)|^{-1}}{(1-|z|)^{1+p}} \, dm(z) < \infty. 
\end{equation*}
Fix $0<c<1$. Given a probability measure $\sigma$, which is supported in the disc of radius~$c$ 
centered at the origin, and which satisfies
\begin{equation} \label{eq:potential}
  M= \sup_{z \in \D} \int_{\D} \log |\tau_a (z) |^{-1} \, d \sigma (a) < \infty, 
\end{equation}
we have to show
%\begin{equation*} 
  $\sigma \big(\{a \in \D : F(a) = \infty \}\big)=0$.  
%\end{equation*}
Let  $E=\{z \in \D : |B(z)| \leq (1+c)/2 \}$. Fubini's theorem gives
\begin{equation*}
  \int_{\D} F(a) \, d\sigma(a) = {\rm(I)} + {\rm(II)}, 
\end{equation*}
where
\begin{align*}
  {\rm (I)} &= \int_{E}\int_{\D} \frac{\log |(\tau_a \circ B) (z)|^{-1} }{(1-|z|)^{1+p}} \, d \sigma (a) dm(z) , \\ 
  {\rm (II)} &= \int_{\D \setminus E}\int_{\D} \frac{\log |(\tau_a \circ B) (z)|^{-1}}{(1-|z|)^{1+p}} \, d \sigma (a) dm(z) . 
\end{align*}
Using \eqref{eq:potential} and part (b) of  Theorem~\ref{thm:main}, we obtain 
\begin{equation*}
  {\rm (I)} \leq  M \int_{E} \frac{dm(z)}{(1-|z|)^{1+p}}  < \infty . 
\end{equation*}
Observe that $|B(z)| > (1+c)/2$ for any $z \in \D \setminus E$. Since $\varrho(c, (1+c)/2)>0$, 
there exists a constant $C_1=C_1(c) >0$ such that
\begin{equation*}
  \log |(\tau_a \circ B) (z)|^{-1} \leq C_1 (1-|B(z)|) , \quad z \in \D \setminus E , \quad |a|\leq c. 
\end{equation*}
Since $\sigma$ is supported in $\{z \in \D : |z| \leq c \}$, we deduce
\begin{equation*}
  {\rm(II)} \leq C_1 \int_{\D} \frac{1-|B(z)|}{(1-|z|)^{1+p}} \, dm(z). 
\end{equation*}
Because $1-|B(r e^{i \theta})| \leq \int_r^1 |B' (s e^{i \theta})| \, ds$ for a.e.~$e^{i \theta} \in \partial \D$, 
by using polar coordinates and Fubini's theorem, we deduce
\begin{equation*}
  {\rm(II)} \leq \frac{C_1}{p} \int_{\D} \frac{|B'(z)|}{(1-|z|)^{p}} \, dm(z) < \infty, 
\end{equation*}
 by \eqref{eq:sobolev} with $q=1$ and $s=p$. This finishes the proof.   
\end{proof}

%%%%%%%%%%%%%%%%%%%%%%
%%%% ---- PROOF ---- %%%%
%%%%%%%%%%%%%%%%%%%%%%

\begin{proof}[Proof of Corollary~\ref{thm:F}]
Let us first prove the necessity of the condition \eqref{eq:F}. 
Fix $\alpha >1 $ sufficiently large such that for any $Q(I)\in \mathcal{E}$ 
and any $e^{i \theta} \in I$, we have $T(R) \subset \Gamma_{\alpha} (e^{i \theta})$
for all $R\in\mathcal{E}$ for which $\ell(R)\geq \ell(Q(I))$ and $Q(I) \subset 3R$. 
Consider the function $F= F_{\alpha, B}$ defined in~\eqref{eq:defF}. 
Observe that if $Q(I) \in {\mathcal{E}}_N$ then $I \subset \{e^{i \theta} \in \partial \D : F(e^{i \theta}) > 2^N\}$. 
Hence $\ell ({\mathcal{E}}_N) \leq |\{e^{i \theta}  \in \partial \D : F(e^{i \theta}) > 2^N\}|$ 
and the sum in \eqref{eq:F} is bounded by a~fixed multiple of $\|F\|_{L^p (\partial \D)}^p$. Since $B' \in H^p$,  
the condition \eqref{eq:F} is satisfied by part (c) of Theorem~\ref{thm:main}. 

Conversely, assume that \eqref{eq:F} holds and let us show that $B' \in H^p$. Pick $\alpha > 1$ 
close enough to $1$ such that the following conditions are satisfied:
\begin{itemize}
\item[\rm (i)]
for any sector $Q(I)$ and any point $e^{i \theta} \in \partial \D$ 
with $T(Q) \cap \Gamma_{\alpha} (e^{i \theta}) \neq \emptyset $, we have $e^{i \theta} \in 2I$;
\item[\rm (ii)]
each Stolz angle $\Gamma_{\alpha}(e^{i\theta})$, for $e^{i\theta}\in\partial\D$, intersects
at most two top parts of sectors, which are of the same size and belong to $\mathcal{E}$.
\end{itemize}
We proceed to prove
\begin{equation} \label{eq:contingut}
  \big\{e^{i \theta} \in \partial \D : F(e^{i \theta}) >  2^{N+1} \big\} \subset \bigcup_{Q(I) \in {\mathcal{E}}_N} 2I, \quad N \in \N .  
\end{equation}
Actually, assume that $e^{i \theta} \notin 2 I$ for any $Q=Q(I) \in {\mathcal{E}}_N$. Consequently, we have
$T(Q) \cap \Gamma_{\alpha} (e^{i \theta}) = \emptyset$ for any $Q \in {\mathcal{E}}_N$. 
For $n \in \N$, let $Q_n(e^{i\theta})\in\mathcal{E}$ be the sector, 
whose normalized length is $2^{-n}$ and whose base contains the point $e^{i\theta}\in\D$.
Since
\begin{equation*}
  \sum_{\substack{z_k\in\Gamma_\alpha(e^{i\theta})\\1-|z_k| \geq 2^{-n-1}}} \frac{1}{1-|z_k|} 
  \leq 2 \cdot F\big( Q_n(e^{i\theta})\big) \leq 2^{N+1},
\quad n\in\N,
\end{equation*}
we deduce that $F(e^{i \theta}) \leq 2^{N+1}$. This 
proves \eqref{eq:contingut}. Now \eqref{eq:F} and \eqref{eq:contingut} give $F \in L^p (\partial \D)$,
and by Theorem~\ref{thm:main}, we conclude $B' \in H^p$. 
\end{proof}

%%%%%%%%%%%%%%%%%%%%%%
%%%% ---- PROOF ---- %%%%
%%%%%%%%%%%%%%%%%%%%%%

\begin{proof}[Proof of Corollary~\ref{thm:protasimp}]
Let $Q$ be a dyadic sector of $\D$ for which $0<\ell(Q)<1/2$, and assume that $z_n\in T(Q)$. 
There exists an~absolute constant $0<\sigma<1$
such that $\varrho(z_n,z_Q)<\sigma$, and hence
\begin{equation*} 
  \frac{1-|z_n|^2}{|1-\overline{z}_n z|^2} \simeq  \frac{1-|z_Q|^2}{|1-\overline{z_Q} z|^2}, \quad z\in\D,
\end{equation*}
with absolute comparison constants. By applying \eqref{eq:bderind}, we obtain
\begin{equation*}
  \big| B'(z) \big| 
  \lesssim \sum_{Q \in \mathcal{E}} N(Q) \, \frac{1-|z_Q|^2}{|1-\overline{z_Q} z|^2},
  \quad z\in\D.
\end{equation*}
Since $1/2<p<1$, we conclude
\begin{equation*}
  \int_0^{2 \pi}|B'(re^{i\theta})|^p \, d\theta
  \lesssim \sum_{Q \in \mathcal{E}} N(Q)^p \int_0^{2\pi}  \frac{(1-|z_Q|^2)^p}{|1-\overline{z_Q} r e^{i\theta}|^{2p}} \, d\theta
\end{equation*}
for any $0<r<1$. Since the last integral is comparable to $\ell (Q)^{1-p}$, the assertion in (a) follows.

The proof of (b) is more involved and is based on Theorem \ref{thm:main}. Since $0<\varepsilon < 1$ is fixed 
we write $\mathcal{A}(Q) = \mathcal{A}_{\varepsilon} (Q)$.  Let $\mathcal{E}_1$ be the family of maximal dyadic 
sectors $Q \in \mathcal{E}$ such that $N(Q) \geq C$. The family $\mathcal{E}_1$ will be called the first generation. 
For each $Q \in \mathcal{E}_1$, consider only the zeros which are not contained in the top parts of dyadic sectors 
of the family $\mathcal{A}(Q)$. Then the second generation $\mathcal{E}_2$ is defined as the maximal dyadic 
sectors $Q \in \mathcal{E}$ strictly contained in sectors of the first generation such that $N(Q) \geq C$. 
Observe that 
we do not count the zeros located in top parts of sectors of the family $\mathcal{A} (Q)$ for $Q \in \mathcal{E}_1$. 
For each sector $Q \in \mathcal{E}_2$, consider only the zeros which are not contained in top parts of dyadic sectors 
of the family $\mathcal{A}(Q)$, and continue the construction inductively.

On one hand, the subsequence of zeros $\{z_n \}$ contained in top parts of dyadic sectors $Q \in \mathcal{E}$ 
with $N(Q) < C$ is a finite union of separated sequences. Hence, by Corollary \ref{cor:cohnimp},
\begin{equation} \label{light}
  \sum_{\substack{Q \in \mathcal{E}\\ N(Q) < C}} N(Q)^{p} \ell(Q)^{1-p} < \infty .
\end{equation}
On the other hand, dyadic sectors $Q$ such that $N(Q) \geq C$ are either in the family $ \mathcal{E}_j $ 
for some $j\in\N$ or in $\mathcal{A} (Q)$ for some $Q \in  \mathcal{E}_j$. Hence
\begin{equation} \label{heavy}
\begin{split}
  & \sum_{\substack{Q \in \mathcal{E}\\ N(Q) \geq  C}} N(Q)^{p} \ell(Q)^{1-p}\\
  & \qquad \leq \sum_{j=1}^\infty  \left( \, \sum_{Q \in \mathcal{E}_j} \Bigg(   N(Q)^{p} \ell(Q)^{1-p} 
    + \sum_{R \in \mathcal{A}(Q)} N(R)^{p} \ell(R)^{1-p} \Bigg) \right).
\end{split} 
\end{equation}

Observe that for any $Q \in \mathcal{E}_j$ and any $R \in \mathcal{A}(Q)$, we have 
$N(R)/ \ell(R) < (1+ \varepsilon) N(Q)/ \ell(Q)$. Hence
\begin{equation*}
  \sum_{R \in \mathcal{A}(Q)} N(R)^{p} \ell(R)^{1-p} \leq (1+ \varepsilon)^p \, \frac{N(Q)^p}{\ell(Q)^p} \sum_{R \in \mathcal{A}(Q)} \ell(R). 
\end{equation*}
Using the assumption, we deduce
\begin{equation} \label{heavy1}
  \sum_{R \in \mathcal{A}(Q)} N(R)^{p} \ell(R)^{1-p} 
  \leq C (1+ \varepsilon)^p N(Q)^{p} \ell(Q)^{1-p} . 
\end{equation}
Hence, by  \eqref{light}, \eqref{heavy} and \eqref{heavy1}, the proof reduces to the estimate 
\begin{equation} \label{superheavy}
  \sum_{j=1}^\infty \sum_{Q \in \mathcal{E}_j}   N(Q)^{p} \ell(Q)^{1-p} < \infty . 
\end{equation}

Fix $Q \in \mathcal{E}_j $ for $j \in\N$, and consider the set 
\begin{equation*}
  L(Q)=\left\{z\in Q :  (1- \varepsilon) \, \frac{\ell(Q)}{N(Q)} < 1-|z| < (1 + \varepsilon) \, \frac{\ell(Q)}{N(Q)} \right\}.
\end{equation*}
Since the Blaschke product $B$ has $N(Q)$ zeros in $T(Q)$, there exists a universal constant $0<c<1$ such that
$|B(z)|< c$ for  $z \in L(Q)$.
This follows from the estimates $|B(z)| \lesssim \varrho(z, z_Q)^{N(Q)} $ and $1-\varrho(z, z_Q)^2 \simeq N(Q)^{-1}$
for $z\in L(Q)$. By construction, the sets $\{L(Q): Q \in \bigcup_{j \geq 0} \mathcal{E}_j \}$ are pairwise disjoint. Hence 
\begin{equation*}
  \sum_{j=1}^\infty \sum_{Q \in \mathcal{E}_j} \, \int_{L(Q)} \frac{dm(z)}{(1-|z|)^{1+p}} 
  \leq \int_{\{z\in\D \, : \, |B(z)|<c \}} \frac{dm(z)}{(1-|z|)^{1+p}} < \infty.
\end{equation*}
Since 
\begin{equation*}
  \int_{L(Q)} \frac{dm(z)}{(1-|z|)^{1+p}} \simeq N(Q)^p  \ell(Q)^{1-p}, 
\end{equation*}
estimate \eqref{superheavy} follows and the proof is completed. 
\end{proof}

%%%%%%%%%%%%%%%%%%%%%%%
%%%% ---- SECTION ---- %%%%
%%%%%%%%%%%%%%%%%%%%%%%

\section{Questions} \label{sec:questions}

We conclude this paper with two questions.
First, consider those Blaschke products whose derivative belongs to the weak Hardy space
$H^p_w$ for $1/2<p<1$.
These are precisely the Blaschke products~$B$, for which
there exists a constant $C=C(B)$ with $0<C<\infty$, such that
\begin{equation*}
\big| \big\{ e^{i\theta} \in \partial\D : |B'(re^{i\theta})| > \lambda\big\} \big| \leq \frac{C}{\lambda^p},
\quad 0<\lambda<\infty,
\end{equation*}
for any $0<r<1$. 
Blaschke products whose derivative belongs to
$H^1_w$, and Blaschke products in the weak Besov spaces, have been described in \cite{CN:2015,GN:2014}.
Is it true that $B'\in H^p_w$ if and only if $F_{\alpha,B}\in L^p_w(\partial\D)$?

Second, consider Blaschke products $B$ that satisfy the Besov-type condition
\begin{equation*}
\int_0^1 M_p(r,B')^q \, (1-r)^{(1-\alpha)q-1} \, dr < \infty,
\end{equation*}
where $0<\alpha<1$, $0<p\leq \infty$ and $0<q<\infty$. As usual, $M_p(r,B')$ denotes the integral
mean of $B'$. For a general reference concerning inner functions in the Besov-type spaces, see~\cite{V:1984}.
Is it possible to describe these Blaschke products (for a certain range of parameters)
in terms of the geometric zero distribution, in particular, by using an auxiliary function
reminiscent of $F_{\alpha,B}$?

%%%%%%%%%%%%%%%%%%%%%%%%%%%%%%%%%
%%%% ---- ACKNOWLEDGEMENTS---- %%%%
%%%%%%%%%%%%%%%%%%%%%%%%%%%%%%%%%
\bigskip
\noindent
\textbf{Acknowledgements.}
The authors would like to thank J.~R\"atty\"a for pointing out the reference \cite{L:1991}.

%%%%%%%%%%%%%%%%%%%%%%%%%%%%%%%%
%%%% ---- BIBLIOGRAPHY ---- %%%%
%%%%%%%%%%%%%%%%%%%%%%%%%%%%%%%%

\end{document}